\providecommand{\U}[1]{\protect\rule{.1in}{.1in}}
\newtheorem{theorem}{Theorem}
\newtheorem{definition}{Definition}
\newtheorem{lemma}{Lemma}
\newtheorem{notation}{Notation}
\newtheorem{proposition}{Proposition}
\newtheorem{remark}{Remark}
\begin{document}
\date{\today}
\title{A Lyapunov function for a  Synchronisation diffeomorphism  of three clocks}
\author{Jorge Buescu$^1$, Emma D'Aniello$^2$ and Henrique M. Oliveira$^{3*}$}
\thanks{$^1$jsbuescu@ciencias.ulisboa.pt; 
ORCID: 0000-0001-5444-5089; Departamento de Matemática, Faculdade de Ciências, CEMS.UL - centre for Mathematical Studies, 
FCT  UID/04561/2025,  Universidade de Lisboa, Campo Grande, 1749-006 Lisbon, Portugal.}
\thanks{$^2$emma.daniello@unicampania.it; ORCID: 0000-0001-5872-0869; Dipartimento di Matematica e Fisica, Universit\`{a} degli Studi della Campania
\textquotedblleft Luigi Vanvitelli\textquotedblright, Viale Lincoln n. 5 -
81100 Caserta, Italia}
\thanks{$^3$henrique.m.oliveira@tecnico.ulisboa.pt; ORCID: 0000-0002-3346-4915; 
Centro de An\'alise Matem\'atica, Geometria e Sistemas Din\^amicos, Departamento de Matem\'atica, Instituto Superior T\'ecnico, Universidade de Lisboa, Av. Rovisco Pais 1, 1049-001 Lisbon, Portugal;\\$^*$ Corresponding author;  }

\subjclass{Primary 37E30, Secondary 34D06}
\keywords{Lyapunov function, Stability, Attractors, Huygens Synchronisation, Iteration of Diffeomorphisms}
\begin{abstract}
Lyapunov functions are essential tools in dynamical systems, as they allow the stability analysis of equilibrium points without the need to explicitly solve the system’s equations. Despite their importance, no systematic method exists for constructing Lyapunov functions. In a previous paper, we examined a diffeomorphism arising from the problem of Huygens Synchronisation for three identical limit cycle clocks arranged in a line, proving that the system possesses a unique asymptotically stable  fixed point on the torus $\mathbb{T}^2$, corresponding to synchronisation in phase opposition. In this paper, we rederive this result by constructing a discrete Lyapunov function for the system.

The closure of the basin of attraction of the  asymptotically stable attractor is the torus $\mathbb{T}^2$, showing that Huygens Synchronisation exhibits generic and robust behaviour, occurring with probability one with respect to initial conditions.
\end{abstract}
\maketitle

\section{Introduction and preparatory results}
\label{sec_intro}

\subsection{Motivation and Paper Organization}

The study of Lyapunov functions plays a pivotal role in understanding the stability of dynamical systems, offering a robust framework for the analysis of equilibrium points without requiring explicit solutions to the governing equations. This paper builds on the synchronisation problem of three coupled oscillatory systems with nearest-neighbour interactions, a scenario characterised by rich dynamical properties and practical relevance. We address this problem by constructing a discrete Lyapunov function for the Synchronisation diffeomorphism of three clocks and establishing that phase opposition between clocks is the paradigm for these systems, reinforcing and extending prior stability results.

The paper is structured as follows: Section 1 introduces the foundational concepts and theoretical framework of discrete Lyapunov functions. Section 2 details the dynamical system under study, arising from the Synchronisation problem, and its key properties. Section 3 focuses on constructing the Lyapunov function and demonstrating its applicability to stability analysis. Finally, Section 4 concludes with a synthesis of the results and their implications for broader applications in dynamical systems theory.

\subsection{Discrete Lyapunov functions}
\label{subsec_discLyap}

The stability analysis of equilibrium points is a cornerstone of dynamical systems and control theory. Lyapunov's approach \cite{lyapunov1892} to stability analysis, commonly referred to as ``Lyapunov's Second Method'' \cite{Gu1983,hirsch2012differential,lasalle1976stability}, offers a systematic framework for evaluating the stability or asymptotic stability of equilibrium points. This method employs Lyapunov functions and is applicable to both continuous-time and discrete-time dynamical systems, without requiring explicit computation of the system's flow.

In this paper, we focus on discrete dynamical systems defined by the iteration of diffeomorphisms, specifically addressing discrete-time Lyapunov stability. Discrete-time Lyapunov functions are only required to be continuous, as noted by La Salle \cite{lasalle1976stability}. Consequently, discrete Lyapunov functions naturally fall within the domain of topological dynamics.

Our results about discrete Lyapunov functions are pre\-sen\-ted within this general topological dynamics framework. From Section \ref{sec_DS} onwards, we narrow our focus to the specific case of the Synchronisation diffeomorphism of $\mathbb{R}^2$ (or $\mathbb{T}^2$) for the problem of three clocks on a line with nearest neighbour interactions, which is of primary relevance to this work.

\begin{definition}
\label{TDS}
Let $X$ be a topological space and $f: X \to X$ be a continuous map. We  refer to the pair $(X, f)$
as a {\em topological dynamical system}. 
\end{definition}

We note that this should technically be referred to as a semi-dynamical system. However, as this distinction is not relevant to the discussion that follows — given that the map studied in this paper is invertible — we omit this distinction hereafter.

\begin{definition}[Discrete orbital derivative]
\label{def_orb_deriv}
Let $(X,f)$ be a topological dynamical system and $V: X \to \mathbb{R}$  be a continuous function. 
We define the {\em discrete orbital derivative of $f$\/} as the function
\[ \dot{V}(x) = V(f(x)) - V(x). \]
\end{definition}

If $(x_n)_{n \in \mathbb{N}}$ is an orbit of the dynamical system defined by $f$ (i.e. $x_{n+1}= f(x_n)$), then $\dot{V}(x_n) =  V(x_{n+1}) - V(x_{n})$, and therefore 
$\dot{V}(x) \leq 0$ means that $V$ is nonincreasing along orbits of $f$.

\begin{definition}[Discrete Lyapunov function]
\label{def_discrete_Lyap}
Let $(X,f)$ be a topological dynamical system and $V: X \to \mathbb{R}$  be a continuous function.
Suppose $S \subset X$. We say $V$ is a {\em Lyapunov function for $f$ on $S$\/} if 
$\dot{V}(x) \leq 0$ for all $x \in S$.
\end{definition}


We then
have the following discrete version of Lyapunov's Stability Theorem, whose statement may be found in 
 \cite{lasalle1976stability}. Here, the discrete versions of Lyapunov stability and asymptotic stability are defined in a standard manner, also to be found in \cite{lasalle1976stability}; see also e.g. \cite{buescu1998exotic,Buescu1995}.

\begin{theorem}[Discrete Lyapunov Stability Theorem]
\label{Lyapunov for maps}
Let $(X,f)$ be a topological dynamical system.
Let $H$ be compact and let $S$ be an open set containing $H$. Suppose that $V(x)$ is a function such that 
\begin{enumerate}
\item $V(x) \leq 0$ for $x \in H$ and $V(x) > 0$ for $x \in S \setminus H$, and
\item $V$ is a Lyapunov function for $f$ on $S$.
\end{enumerate}
Then $H$ is Lyapunov stable. If, in addition, $\dot{V}(x) <0$ on $S \setminus H$, then $H$ is asymptotically stable.
\end{theorem}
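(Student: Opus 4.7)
The plan is to adapt the classical Lyapunov argument to the discrete setting by replacing the continuity of orbits with a sublevel-set trapping argument. I will establish Lyapunov stability first, then upgrade to asymptotic stability using the strict inequality $\dot V<0$.

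\textbf{Stability step.} Given an arbitrary open neighbourhood $U$ of $H$, I would first shrink it to a smaller open neighbourhood $U'$ of $H$ with $\overline{U'}$ compact and $\overline{U'}\subset U\cap S$; this is possible because $H$ is compact and $S$ is open. The boundary $\partial U'$ is then a compact subset of $S\setminus H$, so by hypothesis (1) and continuity of $V$,
\[
m:=\min_{x\in\partial U'}V(x)>0.
\]
Since $V\leq 0<m$ on $H$, continuity of $V$ shows that
\[
N:=\{x\in U':V(x)<m\}
\]
is an open neighbourhood of $H$ contained in $U'$. The set $N$ is the candidate trap, and Lyapunov stability will follow once I show that orbits started in $N$ never leave $U'$.

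\textbf{Invariance of the trap.} Inductively, if $x_0\in N$ and $x_0,\dots,x_{n-1}\in U'\subset S$, hypothesis (2) yields
\[
V(x_n)\leq V(x_{n-1})\leq\cdots\leq V(x_0)<m,
\]
so $x_n$ cannot lie on $\partial U'$, where $V\geq m$. Combined with the continuity of $f$ and the compactness of $\overline{U'}$ (possibly after one further shrinking of $N$ to enforce $f(N)\subset U'$), this also rules out $x_n\in X\setminus\overline{U'}$, and so $x_n\in U'\subset U$ for every $n\geq 0$. Since $U$ was arbitrary, this is precisely Lyapunov stability of $H$.

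\textbf{Asymptotic stability.} Assume now $\dot V(x)<0$ on $S\setminus H$. For $x_0\in N$ the sequence $\bigl(V(x_n)\bigr)_{n\geq 0}$ is nonincreasing and bounded below by $\min_{\overline{U'}}V$, so it converges to some $L\in\mathbb{R}$. By compactness of $\overline{U'}$, the $\omega$\nobreakdash-limit set $\omega(x_0)$ is nonempty and contained in $\overline{U'}\subset S$, and any $y\in\omega(x_0)$ satisfies $V(y)=V(f(y))=L$ by continuity of $V$ and $f$. Thus $\dot V(y)=0$, which together with the strict inequality forces $y\in H$. Hence $\omega(x_0)\subset H$ and, $H$ being compact, $\mathrm{dist}(x_n,H)\to 0$, giving asymptotic stability.

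The main obstacle is the invariance step: in the continuous-time proof the trajectory is prevented from crossing $\partial U'$ by continuity of time, whereas in the discrete case the orbit could a priori leap over the barrier $\{V=m\}\cap\partial U'$. Handling this rigorously requires the extra use of continuity of $f$ (and, in general, some mild local connectedness of the ambient space), and is the one point where the classical LaSalle argument needs genuine modification before it transfers to the discrete setting.
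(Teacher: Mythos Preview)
The paper does not supply its own proof of this theorem: it is quoted with a reference to LaSalle and then used as a black box. So there is nothing in the paper to compare against, and your proposal stands or falls on its own.

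Your overall architecture---trap via a sublevel set inside a compact neighbourhood, then a LaSalle-type $\omega$-limit argument---is the standard one, and the asymptotic-stability paragraph is fine once stability is secured. But the gap you yourself flag in the final paragraph is real and your suggested patch does not close it. Shrinking $N$ so that $f(N)\subset U'$ buys only the first iterate: you then know $x_1\in U'$ with $V(x_1)<m$, i.e.\ $x_1$ lies in the \emph{unshrunken} set $N_0=\{x\in U':V(x)<m\}$, not necessarily in the shrunken $N$, so the induction does not propagate. What you actually need is positive invariance of $N_0$ itself, and nothing in the bare hypotheses prevents $f$ from mapping a point of $N_0$ to a point of $X\setminus\overline{U'}$ where $V$ happens also to be small.

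There are two further tacit assumptions in your argument. First, choosing $U'$ with $\overline{U'}$ compact presupposes local compactness of $X$, which Definition~\ref{TDS} does not grant. Second, the asymptotic-stability step (and even the claim $f(H)\subset H$ underlying ``shrink $N$'') needs $f(H)\subset S$, which again is not among the hypotheses. In LaSalle's setting, and in the paper's applications on $\mathbb{R}^2$ and $\mathbb{T}^2$, all of this is harmless; but if you want a self-contained proof at the stated level of generality you must either add these standing assumptions explicitly or rework the trapping argument (e.g.\ via compact connected components of sublevel sets) rather than leave it at ``mild local connectedness''.
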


To state our next result we need to recall the notion  of topological conjugacy of dynamical systems.

\begin{definition}
\label{def_TC}
 We say that two topological dynamical systems $(X_1, f_1)$ and $(X_2, f_2)$ are {\em topologically 
conjugate\/} if there exists a homeomorphism $h: X_1 \to X_2$ such that 
\begin{equation}
\label{eqn_TC}
h \circ f_1 = f_2 \circ h.
\end{equation}
\end{definition}
When two dynamical systems $(X_1, f_1)$ and $(X_2, f_2)$ are topologically conjugate, a homeomorphism 
$h$ performing the 
conjugation, i.e. satisfying \eqref{eqn_TC}, is called a topological conjugacy between $f_1$ and $f_2$.

\begin{proposition}
\label{thm_conjugacy}
Let $(X_1, f_1)$ and $(X_2, f_2)$ be topologically conjugate. Suppose the system $(X_1, f_1)$ admits
a Lyapunov function $V_1$, and let $S_1, \, H_1$ be the associated open and compact sets 
in Theorem~\ref{Lyapunov for maps}. Then
\begin{equation}
V_2 = V_1 \circ h^{-1}
\label{eq_Lyap_TC}
\end{equation}
is a Lyapunov function for $f_2$ associated to the sets $S_2= h(S_1)$  and $H_ 2 = h(H_1)$.
\end{proposition}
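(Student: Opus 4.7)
The plan is to verify that $V_2 = V_1 \circ h^{-1}$ satisfies all the hypotheses of Theorem~\ref{Lyapunov for maps} on $X_2$ with the sets $S_2 = h(S_1)$ and $H_2 = h(H_1)$. The proof is essentially a transport-of-structure argument: every relevant property of $V_1$, $S_1$, $H_1$ transfers to $V_2$, $S_2$, $H_2$ via the homeomorphism $h$.

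First I would check the topological data. Since $h$ is a homeomorphism, $H_2 = h(H_1)$ is compact and $S_2 = h(S_1)$ is open, with $H_2 \subset S_2$. The function $V_2 = V_1 \circ h^{-1}$ is continuous as a composition of continuous maps. The sign conditions transfer immediately: for $y \in H_2$ we have $h^{-1}(y) \in H_1$, so $V_2(y) = V_1(h^{-1}(y)) \le 0$; for $y \in S_2 \setminus H_2 = h(S_1 \setminus H_1)$ we have $h^{-1}(y) \in S_1 \setminus H_1$, so $V_2(y) > 0$. This verifies condition (1) of Theorem~\ref{Lyapunov for maps} for $V_2$.

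The key computation is the one for the orbital derivative, where the conjugacy relation \eqref{eqn_TC} enters. From $h \circ f_1 = f_2 \circ h$ one obtains $h^{-1} \circ f_2 = f_1 \circ h^{-1}$, and therefore, for any $y \in X_2$,
\begin{equation*}
\dot{V}_2(y) = V_2(f_2(y)) - V_2(y) = V_1(h^{-1}(f_2(y))) - V_1(h^{-1}(y)) = V_1(f_1(h^{-1}(y))) - V_1(h^{-1}(y)) = \dot{V}_1(h^{-1}(y)).
\end{equation*}
Thus the orbital derivative of $V_2$ at $y$ equals the orbital derivative of $V_1$ at $h^{-1}(y)$. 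Since $y \in S_2$ iff $h^{-1}(y) \in S_1$, and $V_1$ is a Lyapunov function for $f_1$ on $S_1$, we conclude $\dot{V}_2(y) \le 0$ on $S_2$, verifying condition (2) and showing $V_2$ is a Lyapunov function for $f_2$ on $S_2$.

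There is no real obstacle here; the content of the result is simply that the discrete orbital derivative is intertwined by the conjugacy, so the verification is a one-line calculation once the definitions are unwound. The only point to be mildly careful about is using the equivalent form $h^{-1} \circ f_2 = f_1 \circ h^{-1}$ of the conjugacy identity, rather than \eqref{eqn_TC} itself, since $V_2$ is built from $h^{-1}$.
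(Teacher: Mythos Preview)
Your proof is correct and follows essentially the same approach as the paper: both compute the orbital derivative of $V_2$ at a point $y\in X_2$ via the conjugacy relation to obtain $\dot{V}_2(y)=\dot{V}_1(h^{-1}(y))$, and both transfer the set-theoretic properties of $S_1,H_1$ to $S_2,H_2$ through the homeomorphism $h$. Your write-up is in fact a bit more explicit than the paper's, which dispatches the verification of the properties of $S_2$ and $H_2$ by appealing to the commutative diagram rather than spelling out the sign and openness/compactness checks.
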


\proof 
Suppose $x_2 \in X_2$ and let $x_1 = h^{-1}(x_2)$. Then it follows  that 
\[ \dot{V_2}(x_2) = V_1(f_1(h^{-1}(x_2))) - V_1(h^{-1}(x_2)) = V_1(f_1(x_1)) - V_1(x_1))
\]
so $V_2$ is a Lyapunov function for the topological dynamical system $(X_2, f_2)$. Moreover, consideration 
of the commutative diagram
\begin{center}
\begin{tikzcd}
    X_1 \arrow[r, "f_1"] \arrow[d, "h"'] & X_1 \arrow[d, "h"] \\
    X_2 \arrow[r, "f_2"'] & X_2
\end{tikzcd}
\end{center}
shows that, if the sets $S_1$ and $H_1$ have the properties stated 
in Theorem~\ref{Lyapunov for maps} for system
$(X_1, f_1)$, then
 the corresponding sets $S_2= h(S_1)$  and $H_ 2 = h(H_1)$ have those
 properties for system $(X_2, f_2)$.
 \qed

\vspace{2mm}
Observe that in any topological space $X$ a finite set is always compact, so Theorem~\ref{Lyapunov for maps} 
and Proposition~\ref{thm_conjugacy} are, in particular, immediately applicable to the study of stability of fixed points. 
Indeed, if  $x^*$ is a fixed point of $f$, then $H= \{ x^* \}$ is compact and the Lyapunov method applies.

In this paper we deal specifically with a diffeomorphism that may be studied equivalently in $\mathbb{R}^2$ or $\mathbb{T}^2$, and our compact
positively invariant set $H$ of interest will be, precisely, a fixed point.
A recent use of Lyapunov functions in the context of discrete maps is discussed in \cite{Baigent2023}.

\section{A dynamical system arising from Synchronisation}
\label{sec_DS}

\subsection{Identical clocks} In a series of recent papers \cite{BAO2024b,EH2}, the  authors investigated the Synchronisation of three plane oscillators with an asymptotically stable limit cycle under the mechanism of Huygens Synchronisation of the second kind, that is, where the interaction is performed not via momentum transfer but by a perturbative mechanism. The model incorporated the Andronov \cite{And} pendulum clock, as used in \cite{OlMe}, but the method also applies to other types of oscillators with coupling given by the discrete Adler  equation \cite{adler1946study,Pit}. 
The theory relies only each individual system  systems having its own limit cycle, while different oscillators 
interact weakly once per cycle. This framework ensures 
applicability of the results irrespective of specific details of the oscillator models. We refer to these oscillators as clocks since we assume each oscillator to be isochronous when isolated from perturbations. 

The case of all-to-all interaction between three clocks was treated in \cite{EH2}. 
In \cite{BAO2024b}, we explore the behaviour of a  linear arrangement of three oscillators with nearest neighbour interactions; see Fig.~\ref{fig: 3_in_Line}.
\begin{figure}[tbh]
\centering
\includegraphics[height=1.3in,width=3.513in]{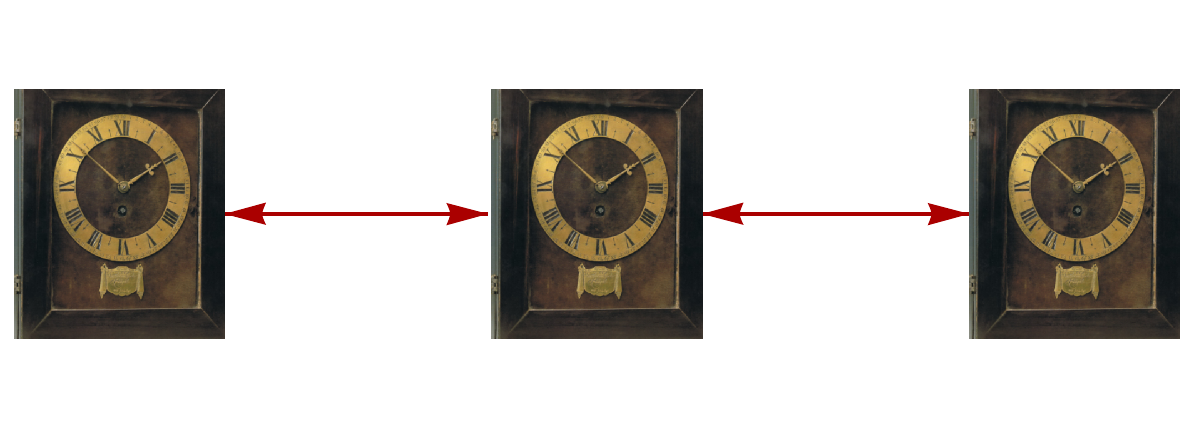}\caption{Three clocks on a line with nearest neighbour interaction.}%
\label{fig: 3_in_Line}%
\end{figure}

Denoting the two independent phase differences between oscillators $B$ and $C$ relative to the central clock $A$ by $x$ and $y$, respectively, it follows from the theory developed in \cite{BAO2024b,EH2} that the dynamics is now described by the discrete system

\begin{equation}%
\begin{bmatrix}
x_{n+1}\\
y_{n+1}%
\end{bmatrix}
=F%
\begin{bmatrix}
x_{n}\\
y_{n}%
\end{bmatrix}
=%
\begin{bmatrix}
x_{n}+2a\sin x_{n}+a\sin y_{n}\\
y_{n}+a\sin x_{n}+2a\sin y_{n}%
\end{bmatrix}
. \label{Eq3}%
\end{equation}
where $0 < a \ll 1$ is the coupling parameter. 

It is easily shown that $F$ in equation \eqref{Eq3} is a diffeomorphism for \begin{equation}\label{eq:acond}
0<a <\frac{1}{6}.
\end{equation}
This is the parameter region for $a$ in which we will work throughout the paper. 

In \cite{BAO2024b} we construct the global dynamics (see the phase portrait in Fig.~\ref{fig:06}), 
which we summarize as follows. 

\begin{figure}
[ptb]
\begin{center}
\includegraphics[
height=4.5in,
width=4in
]%
{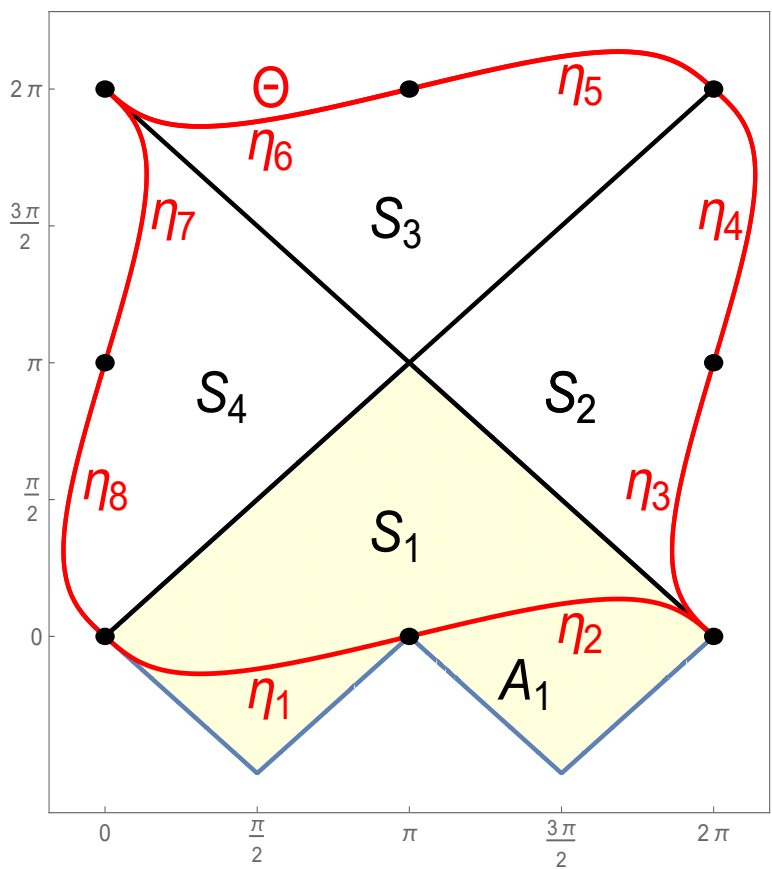}%
\caption{We show here the set $A_1$ in light yellow; the heteroclinics $\eta_j$ for $j=1,\dots,8$ connecting sources and saddles, and the set $\Theta$ in red; the sectors $S_i$ such that $S=\bigcup_{i=1}^{4} S_{i}$ are also depicted.}%
\label{Fig:05}%
\end{center}
\end{figure}

Firstly, the dynamics is periodic with period $2\pi$ in both variables.

Secondly, consider the square $D = [0, 2\pi] \times [0, 2\pi]$. In $D$, there exist 9 fixed points: unstable nodes (sources) at $(0,0)$, $(0, 2\pi)$, $(2\pi, 0)$, and $(2\pi, 2\pi)$; saddle points at $(\pi,0)$, $(0, \pi)$, $(2\pi, \pi)$, and $(\pi, 2\pi)$; and a stable node (sink) at $(\pi,\pi)$. All these fixed points are hyperbolic.

Thirdly, there exist heteroclinic connections between:
\begin{enumerate}
\item the unstable node $(0,0)$ and the saddle point at $(\pi,0)$, which we denote by $\eta_1$;
\item the unstable node $(2\pi,0)$ and the saddle point at $(\pi,0)$, which we denote by $\eta_2$;
\item the unstable node $(2\pi,0)$ and the saddle point at $(2\pi,\pi)$, which we denote by $\eta_3$;
\item the unstable node $(2\pi,2\pi)$ and the saddle point at $(2\pi,\pi)$, that we call $\eta_4$;
\item the unstable node $(2\pi,2\pi)$ and the saddle point at $(\pi,2\pi)$, which we denote by $\eta_5$;
\item the unstable node $(0,2\pi)$ and the saddle point at $(\pi,2\pi)$, which we denote by $\eta_6$;
\item the unstable node $(0,2\pi)$ and the saddle point at $(0,\pi)$, which we denote by $\eta_7$;
\item the unstable node $(0,0)$ and the saddle point at $(0,\pi)$, which we denote by $\eta_8$.
\end{enumerate}

\begin{notation} We denote by $\mathrm{Fix}^u$  the set of the $8$ local unstable fixed points (saddles and sources)  in $D$.
\end{notation}

\begin{notation} We denote by $H=\bigcup_{i=1}^{8} \eta_i$ the union of the heteroclinics defined above.
\end{notation}

\begin{notation}We denote by $\Theta$ the union of the set of  heteroclinics H and the set $\mathrm{Fix}^u$ of the unstable fixed points,
\[
\Theta=\text{H} \cup \mathrm{Fix}^u.
\]
\end{notation}

The above set $\Theta$ is an invariant set that encloses an open domain $S$, as depicted in Fig.~\ref{Fig:05}. The open set $S$ is also invariant.

Fourthly, the symmetry and periodicity properties of the diffeomorphism imply that the phase space is effectively tiled in an invariant way by translations of the domain $\bar{S}=S \cup \Theta$, 
so the dynamics may be understood as taking place on the 2-torus 
$\mathbb{T}^2= \mathbb{R}^2 / (2\pi\mathbb{Z})^2$.

The torus $\mathbb{T}^2$ is a flat topological torus obtained by identifying opposite boundaries of the set $S$. Specifically, the curves $\overline{\eta_1 \cup \eta_2}$ are identified with $\overline{\eta_5 \cup \eta_6}$, and $\overline{\eta_3 \cup \eta_4}$ are identified with $\overline{\eta_7 \cup \eta_8}$, as shown in Fig.~\ref{Fig:05}.
All metric and/or measure properties of $\mathbb{R}^2$ that hold within the invariant compact set $\bar{S}$ are preserved in $\mathbb{T}^2$ due to this construction.

In \cite{BAO2024b} we proved that $S$ is the basin of attraction of the asymptotically stable node $\left( \pi, \pi \right)$. Considering now the dynamics on the torus, we may state the next result.

\begin{proposition}
\label{cor_prob_1}
Almost all initial conditions, in the sense of Lebesgue measure,  on $\mathbb{T}^2$ approach the  Synchronised state $\left( \pi, \pi \right)$, and do so exponentially fast.
\end{proposition}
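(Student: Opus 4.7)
The plan is to split the statement of Proposition~\ref{cor_prob_1} into two separate claims: (i) that the set of initial conditions converging to $(\pi,\pi)$ has full Lebesgue measure on $\mathbb{T}^2$, and (ii) that for each such initial condition the convergence is ultimately exponential. The first part is a measure-theoretic refinement of the basin-of-attraction result already proved in \cite{BAO2024b}, while the second is a straightforward consequence of the hyperbolicity of $(\pi,\pi)$.

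For part (i), I would argue that after passing to the torus $\mathbb{T}^2$, the complement of the open basin $S$ is precisely the invariant set $\Theta = H \cup \mathrm{Fix}^u$. Since $\mathrm{Fix}^u$ consists of finitely many points, it has planar Lebesgue measure zero. Each heteroclinic $\eta_i$ is contained in a one-dimensional invariant manifold of a hyperbolic saddle, hence is a $C^1$ arc (in fact smooth, since $F$ is a diffeomorphism), and therefore has two-dimensional Lebesgue measure zero; the union of eight such arcs is still a null set. Thus $\Theta$ has measure zero on $\mathbb{T}^2$, and by the result from \cite{BAO2024b} every point of the complement $S$ converges to $(\pi,\pi)$, which establishes the full-measure claim.

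For part (ii), I would compute the Jacobian
\[
DF(x,y) = \begin{bmatrix} 1 + 2a\cos x & a\cos y \\ a\cos x & 1 + 2a\cos y \end{bmatrix}
\]
at the fixed point $(\pi,\pi)$, obtaining
\[
DF(\pi,\pi) = \begin{bmatrix} 1 - 2a & -a \\ -a & 1 - 2a \end{bmatrix},
\]
whose eigenvalues are $1-a$ and $1-3a$. For $0 < a < 1/6$, both eigenvalues lie in the open interval $(1/2,1)$, so $(\pi,\pi)$ is a hyperbolic attracting fixed point. By standard linearization (Hartman--Grobman, or a direct contraction estimate using any norm in which $DF(\pi,\pi)$ has operator norm strictly less than one), there exists a neighbourhood $U$ of $(\pi,\pi)$ and a rate $\rho < 1$ such that every orbit entering $U$ converges to $(\pi,\pi)$ at least as fast as $\rho^n$. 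For any initial condition $z \in S$, the fact that the $\omega$-limit is $\{(\pi,\pi)\}$ guarantees that the orbit eventually enters $U$, and from that moment on the convergence is exponential.

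The only mildly delicate step is the null-measure claim for $\Theta$: one must ensure that the heteroclinic arcs really are embedded $C^1$ curves rather than some pathological set. This follows from the stable manifold theorem applied at each hyperbolic saddle, together with the explicit description of the $\eta_i$ given in \cite{BAO2024b} as smooth branches of invariant manifolds. Everything else is essentially bookkeeping, and the proposition follows immediately from the combination of the two parts.
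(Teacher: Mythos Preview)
Your proposal is correct and follows essentially the same approach as the paper's own proof: the complement of the basin $S$ on $\mathbb{T}^2$ is the set $\Theta$, which has zero Lebesgue measure, and exponential convergence follows from the hyperbolicity of the sink $(\pi,\pi)$. The paper's proof is terser---it simply asserts that $\Theta$ has measure zero and invokes hyperbolicity---whereas you have filled in the details (smoothness of the heteroclinic arcs via the stable manifold theorem, explicit eigenvalue computation), but the underlying argument is identical.
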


\begin{proof} The basin of the  asymptotically stable attractor $\left( \pi, \pi \right)$ is open and the only points not in the basins are the heteroclinics from source to saddle and these fixed points, i.e., the set $\Theta$, which has zero Lebesgue measure. 
Thus the basin $S$ has full measure. Exponential rates of attraction are a consequence of the hyperbolicity of the attractor.
\end{proof}

Since 
the torus has finite measure, by the appropriate normalization  this  result may be restated in terms of the corresponding probability measure: with probability $1$ every initial condition approaches the Synchronised state $\left( \pi, \pi \right)$, i.e., at phase opposition between successive clocks. 

Moreover, the structure of the diffeomorphism reveals several symmetries \cite{BAO2024b}. The lines $y=x$ and $y=2\pi-x$, which are also the support of the invariant line segments connecting the four repeller nodes to the only attracting node $(\pi,\pi)$, divide the region $S$ enclosed by $\Theta$ into four closed invariant sectors: $S_1$, $S_2$, $S_3$, and $S_4$, as depicted in Fig.~\ref{Fig:05}. 

\subsection{Equivariance}
Consider a linear bijection  $\Phi \in GL( \mathbb{T}^2)$ which commutes with $F$, that is 
\[
 F\circ\Phi \, (x,y)=\Phi\circ F \, (x,y) \ \ \ \forall (x,y) \in \mathbb{T}^2.
\]
Such $F$ is  said to be a $\Phi$-equivariant map \cite{auslander,field1970,field1980,golubitsky2015}. The set of all such $\Phi$ is easily seen to form a group under composition. This group is a linear action of the symmetry group $\Gamma$ of the map $F$; with a slight abuse of language we identify this representation with $\Gamma$ itself, so that
\[ \Gamma = \{ \Phi \in GL(\mathbb{T}^2) : F\circ\Phi \, =\Phi\circ F  \}. \]

The following proposition summarizes some standard results in equivariant dynamics of which we shall make extensive use in the last section of this article; for completeness, we state and prove it in the present context. 
Recall that a set $S$ is strongly $F$-invariant if $F(S)=S$, while an orbit with initial condition $X_0$ is the set  $O_{X_{0}}= \{ \left(
X_{n}\right) \}_{n \in \mathbb{Z}}$ such that
\[
X_{n+1}=F\left(  X_{n}\right)_, \quad n \in \mathbb{Z}.
\]

\begin{proposition}
Let $F: \mathbb{T}^2 \to \mathbb{T}^2$ be a $\Phi$-equivariant diffeomorphism. Then:
\begin{enumerate}
\item If the set $S$ is strongly $F$-invariant, then  the set $\Phi\left(  S\right)  $ is also strongly $F$-invariant.  
\item If $O_{X_{0}}$ is an orbit of $F$ with initial condition $X_{0}$, then
$
\Phi\left(  O_{X_{0}}\right)  = \{ (  \Phi (  X_{n}) \}_{n \in \mathbb{Z}}
$
is  an orbit of $F$ with initial condition $\Phi\left(  X_{0}\right)  $.
\end{enumerate}
\label{prop_invariance}
\end{proposition}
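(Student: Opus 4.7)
The plan is to derive both statements as direct consequences of the commutation relation $F \circ \Phi = \Phi \circ F$ that defines $\Phi$-equivariance. Both parts are essentially book-keeping: one applies the equivariance identity at the correct spot, and the rest follows from the definitions of strong invariance and of an orbit.

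For part (1), I would start from the hypothesis $F(S) = S$ and compute the image of $\Phi(S)$ under $F$ by pushing $F$ through $\Phi$. Applying the equivariance identity on the level of subsets gives $F(\Phi(S)) = (F \circ \Phi)(S) = (\Phi \circ F)(S) = \Phi(F(S))$, and substituting $F(S)=S$ yields $\Phi(F(S)) = \Phi(S)$. Chaining these identities we conclude $F(\Phi(S)) = \Phi(S)$, which is exactly strong $F$-invariance of $\Phi(S)$. No additional property of $\Phi$ beyond the equivariance relation is invoked here.

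For part (2), I would define the candidate sequence $Y_n := \Phi(X_n)$ for $n \in \mathbb{Z}$ and verify the initial condition and the recurrence separately. The initial condition is tautological, $Y_0 = \Phi(X_0)$. For the recurrence, I apply $\Phi$ to both sides of $X_{n+1}=F(X_n)$ and use equivariance to move $\Phi$ past $F$, obtaining $Y_{n+1} = \Phi(F(X_n)) = F(\Phi(X_n)) = F(Y_n)$, as required. To extend this to negative indices, I would note that composing the equivariance relation on both sides with $F^{-1}$ gives $F^{-1} \circ \Phi = \Phi \circ F^{-1}$, so the analogous computation handles $n<0$.

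The main obstacle, such as it is, is merely checking this extension of equivariance from $F$ to $F^{-1}$ so that the bi-infinite orbit on $\mathbb{T}^2$ is correctly handled. Since $F$ is a diffeomorphism and $\Phi \in GL(\mathbb{T}^2)$ is invertible, this is purely formal and introduces no analytic or topological subtleties.
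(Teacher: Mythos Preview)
Your proposal is correct and follows essentially the same approach as the paper: both parts are obtained directly from the equivariance identity $F\circ\Phi=\Phi\circ F$, with part~(1) verified by pushing $F$ through $\Phi$ on sets and part~(2) by the induced relation $F^{n}\circ\Phi=\Phi\circ F^{n}$ for all $n\in\mathbb{Z}$. The only cosmetic difference is that the paper states this last identity directly rather than checking the recurrence and the $F^{-1}$ case separately as you do.
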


\begin{proof}
If $S$ is strongly invariant, that is $F(S) = S$, 
then $\Phi$-equivariance immediately implies 
\[
F\left(  \Phi\left(  S\right)  \right)  =\Phi\left(  F\left(  S\right)
\right)  =\Phi\left(  S\right),
\]
showing invariance of $\Phi\left(  S\right)$ and proving the first statement. 

For the second statement, notice that $\Phi$-equivariance of $F$ implies 
\[
F^{n}\circ\Phi\left(  X\right)  =\Phi\circ F^{n}\left(  X\right)   \ \ \forall n \in \mathbb{Z}
\]
and therefore, if $Y_0 = \Phi(X_0)$, then 
\[
F^n(Y_0) = F^n(\Phi(X_0)) = \Phi(F^n(X_0)) \ \ \forall n \in \mathbb{Z}, \]
finishing the proof.
\end{proof}

Consider all orbits with initial conditions on an invariant set \( S \).  
Proposition~\ref{prop_invariance} implies that any orbit in \( S \) has an equivalent orbit, in the sense of linear conjugacy, within the invariant set \( \Phi\left(S\right) \).  
More generally, the dynamics of each initial condition in \( S \) are linearly conjugate to the dynamics of the corresponding initial condition in \( \Phi\left(S\right) \).  
In other words, the flow of the dynamical system in \( S \) is linearly conjugate to the flow in \( \Phi\left(S\right) \).  

Naturally, the existence of a Lyapunov function in an open set \( S \) is equivalent to the existence of a Lyapunov function in the image of \( S \) under \( \Phi \). This result follows directly from Proposition~\ref{thm_conjugacy}, since \( \Phi \) is also a topological conjugacy.

\begin{definition}
For $(x,y) \in S$, we define the following sectors:
\begin{align*}
&S_1 = \left\{\left( x,y \right)\in S:
x \leq y \leq 2\pi - x \right\},&\\
&S_2 = \left\{\left( x,y \right)\in S: x - \pi \leq y \leq 2\pi - x \right\},&\\
&S_3 = \left\{\left( x,y \right)\in S: x - \pi \leq y \leq x \right\},&\\
&S_4 = \left\{\left( x,y \right)\in S: x \leq y \leq -x + 2\pi\right\}.
\end{align*}
\end{definition}

We next show that the dynamics in any two of these sectors  is conjugated by an involution. We 
exhibit these maps explicitly below.

\begin{definition}\label{def:phii}
Identity map $\Phi_1$:
\begin{align}\label{eq:phiID}
\Phi_1\left( x, y \right) = \left\{
\begin{aligned}
X & = x, \\
Y & = y.
\end{aligned}
\right.
\end{align}
Reflection $\Phi_2$ along the line $y = 2\pi - x$:
\begin{align}\label{eq:phi1}
\Phi_2\left( x, y \right) = \left\{
\begin{aligned}
X & = 2\pi - y, \\
Y & = 2\pi - x.
\end{aligned}
\right.
\end{align}
Rotation $\Phi_3$ of $\pi$ around the point $\left( \pi, \pi \right)$:
\begin{align}\label{eq:phi3} 
\Phi_3\left( x, y \right) = \left\{
\begin{aligned}
X & = 2\pi - x, \\
Y & = 2\pi - y.
\end{aligned}
\right.
\end{align}
Reflection $\Phi_4$ along the line $y = x$:
\begin{align}\label{eq:phi2}
\Phi_4\left( x, y \right) = \left\{
\begin{aligned}
X & = y, \\
Y & = x.
\end{aligned}
\right.
\end{align}
\end{definition}

With these definitions, we have the following relationship:
\begin{align}\label{eq:phi_relation} 
\Phi_j\left( S_1 \right) = S_{j} \text{ for } j = 1,\dots,4.
\end{align}
\begin{remark}\label{rem:simple_exercise}
It is a simple exercise to verify that \( \Phi_j = \Phi_j^{-1} \), so that \( \Phi_j \) is an involution, 
and it was proven in \cite{BAO2024b} that \( F \) is a \( \Phi_j \)-equivariant map for \( j = 1, \dots, 4 \).
\end{remark}

The structure of the dynamics in $S$ (and therefore in $\mathbb{R}^2$ or $\mathbb{T}^2$) is then completely determined by the dynamics in a particular sector, as depicted in Fig.~\ref{fig:06}. Additionally, it is sufficient to prove that the conditions of the Lyapunov Theorem hold in a single sector, as the result will follow by symmetry for the other sectors, and, consequently, to the open domain $S$, as we prove in paragraph \ref{par:full} of this article.


Before proceeding, we return to the analysis of the dynamical system in $\mathbb{R}^2$, which provides a more convenient framework for constructing the sets on which the discrete Lyapunov function will be defined. The final results will naturally extend to the torus $\mathbb{T}^2$, as the basin of attraction $S$ is an open, invariant, and proper subset of $\mathbb{T}^2$.

From the discussion of the Lyapunov function, we define the set $A_1$ were we claim the negativeness of the orbital derivative of the Lyapunov function that we construct below. Since this region $A_1$ contains the invariant set $S_1$ as shown in \cite{BAO2024b}, the orbital derivative will be negative in this last set as well. Therefore, the same happens by symmetry and $\Phi$-equivariance of $F$ in the other sectors, meaning that the node $(\pi,\pi)$ is asymptotically stable in $S$.

\begin{definition}
We define the compact set $A_1$ (see Fig.~\ref{Fig:05}) 
\begin{align*}
A_1 &= \left\{ (x,y) : 0 \leq x \leq \pi, y \leq x, y \geq -x, y \geq x-\pi \right\} \\
&\cup \left\{ (x,y) : \pi \leq x \leq 2\pi, y \leq 2\pi - x, y \geq \pi - x, y \geq x - 2\pi \right\}.
\end{align*}
\end{definition}

\begin{remark}
The set $A_1$ will be instrumental to construct a Lyapunov function. Note that $S_1 \subset A_1$;
therefore, if the conditions of the Lyapunov Theorem hold in $A_1$, they hold in the invariant set $S_1$
 and thus, by equivariance of the vector field, in the entire region $S$.
\label{rem_Lyapfunction}
\end{remark}

\begin{figure}[ptb]
\begin{center}
\includegraphics[
height=4in,
width=4in
]
{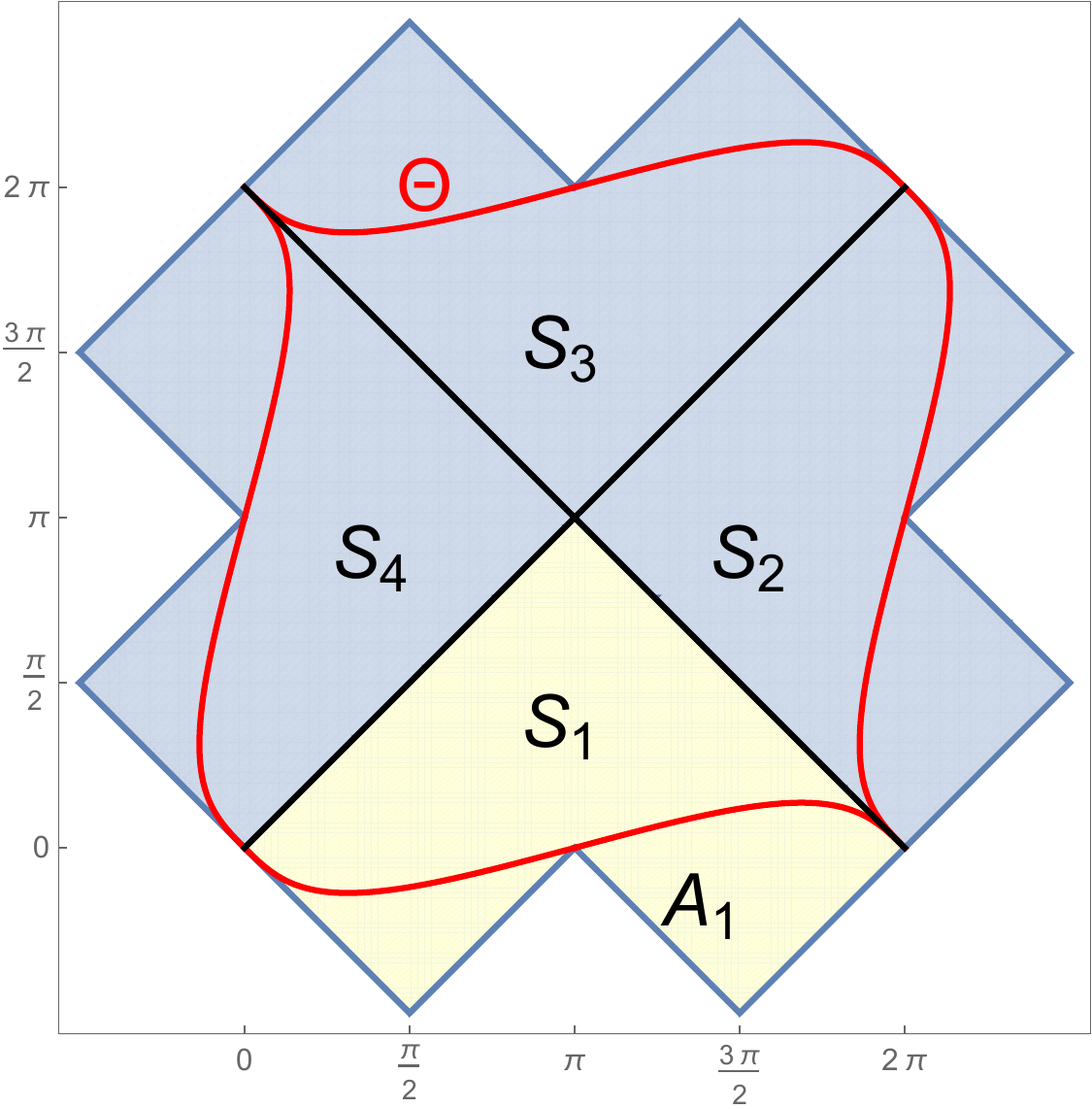}%
\caption{The region where the Lyapunov function is definite positive and the orbital derivative of the Lyapunov function is non-positive is shaded in blue and light yellow, with the light yellow colour representing $A_1$. This region contains the invariant open set $S$, which is enclosed by $\Theta$ (in red). We can observe the intersection of the open domain $S$ with $A_1$, denoted by $S_1$. Our study focuses on this region. The results extend to $S$ by equivariance.
}%
\label{Fig1}%
\end{center}
\end{figure}

\section{Construction of a Lyapunov function}
\label{sec_construction}
Following \cite{bertram1960,giesl2015,lasalle1976stability,sassano2013dynamic}, we now define a Lyapunov function $V\left(
x,y\right)  $ by
\begin{equation}
\label{function_V}
V\left(  x,y\right)  =\left\vert \pi-y\right\vert +\left\vert \pi-x\right\vert
\text{.}
\end{equation}
Note that $V$ is continuous but not $C^1$. However, as stated in definition \ref{def_discrete_Lyap} of section  \ref{sec_intro}, a discrete Lyapunov 
function is only required to be continuous. Moreover, its discrete orbital derivative  \eqref{def_orb_deriv}  is given by
\[
\begin{split} \dot{V}(x, y) = 
    &\left\vert \pi - y - 2a \sin y - a \sin x \right\vert 
    - \left\vert \pi - y \right\vert \\
    &+ \left\vert \pi - x - 2a \sin x - a \sin y \right\vert
    - \left\vert \pi - x \right\vert.
\end{split}
\]
We claim that the orbital derivative $\dot{V}$ is negative in $A_1$. Then, 
by equivariance of the vector field\footnote{The equivariance of the flow is shown in full detail in \cite{BAO2024b}.} we show in paragraph \ref{par:full}) that this implies the asymptotic stability of  
$\left(  \pi,\pi\right)  $ in $S$.

By \eqref{eq:acond}, we consider $0<a<\frac{1}{6}$ throughout the rest of the paper.

\begin{definition}We define the four functions
\[
\begin{split}
\xi_{1}\left(  x,y\right)  =& \pi-y \text{.}\\
\xi_{2}\left(  x,y\right)  =& \pi-y-2a\sin y-a\sin x \text{.}\\
\xi_{3}\left(  x,y\right)  =& \pi-x \text{,}\\
\xi_{4}\left(  x,y\right)  =& \pi-x-2a\sin x-a\sin y \text{,}
\end{split}
\]
\label{def_four functions}
\end{definition}

\noindent With this definition, the orbital derivative is written more compactly as
\begin{equation}
\label{eq_Vdot}
 \dot{V}\left(  x,y\right)
  =\left\vert \xi_{2} \left(  x,y\right) \right\vert -\left\vert \xi_{1} \left(  x,y\right)  \right\vert +\left\vert
\xi_{4} \left(  x,y\right)  \right\vert - \left\vert
\xi_{3} \left(  x,y\right)  \right\vert \text{.}
\end{equation}

\begin{figure}[ptb]
\begin{center}
\includegraphics[height=4in,width=3.95in]{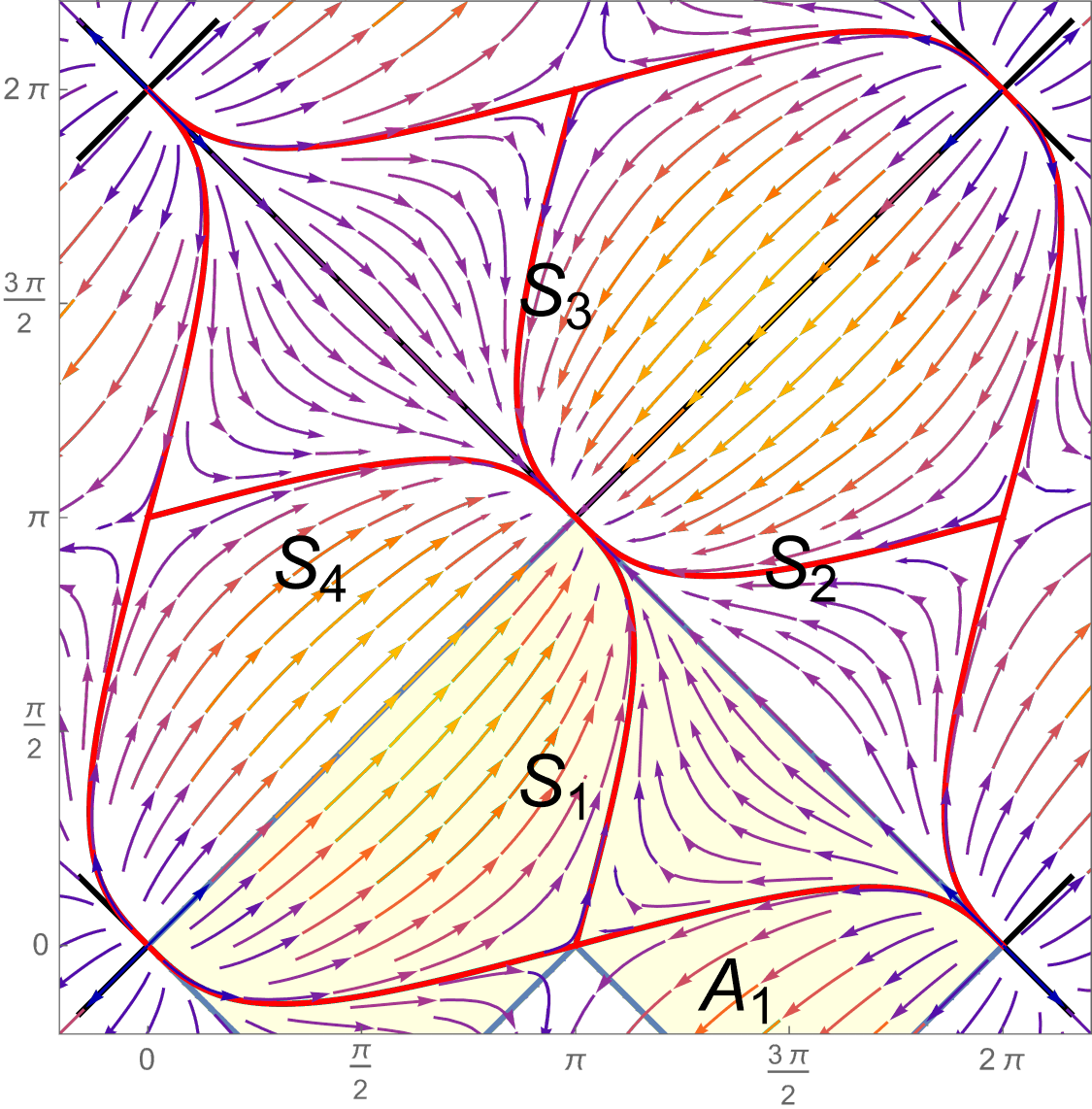}
\end{center}
\caption{Complete phase diagram showing heteroclinic connections between saddle points and stable/unstable foci. The diagram was generated numerically for $a = 0.1$. Our focus is on the yellow shaded region $A_1$ at the bottom of the figure, as the other regions are equivalent by symmetry.
}%
\label{fig:06}%
\end{figure}
\newpage
We claim that the orbital derivative is negative in the interior of $A_1$. We prove this in three steps:
\begin{enumerate}
\item Firstly, we analyse the signs of the individual terms $\xi_i\left(x,t\right)$, $i=1,2,3,4$ occurring in the expression 
\eqref{eq_Vdot} for $\dot{V}$, allowing us to drop the absolute value in the interior of each region of constancy of signs.
\item Secondly, we estimate the sign of the orbital derivative $\dot{V}$ in $A_1$.
\item Thirdly, we extend this analysis to the open set $S$.
\end{enumerate}

\subsection{Analysis of the signs of $\xi_i\left(x,y\right)$, $i=1,2,3,4$} 
We now proceed with the first step of the strategy, analysing the signs of the individual terms 
$\xi_i(x,y)$ in the region $A_1$. We separate the analysis in a sequence of lemmas.
\begin{lemma}\label{lemma1}
In region $A_1$ we have, trivially, that $\xi_{1}\left(x,t\right)=\pi-y\geq0$.
\end{lemma}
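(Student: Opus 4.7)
The plan is to directly verify the inequality $\pi - y \geq 0$ on each of the two pieces comprising $A_1$, exploiting the fact that the defining constraints of $A_1$ already bound $y$ from above by $\pi$. The author's use of the word ``trivially'' signals that no hidden estimate is needed; the conclusion is essentially a re-reading of the definition of $A_1$.

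First I would recall the definition
\begin{align*}
A_1 &= \left\{ (x,y) : 0 \leq x \leq \pi,\ y \leq x,\ y \geq -x,\ y \geq x-\pi \right\} \\
&\cup \left\{ (x,y) : \pi \leq x \leq 2\pi,\ y \leq 2\pi - x,\ y \geq \pi - x,\ y \geq x - 2\pi \right\},
\end{align*}
and split the argument according to which of the two constituents contains the point $(x,y)$. On the first piece the constraints $y \leq x$ and $x \leq \pi$ combine to give $y \leq \pi$. On the second piece the constraint $y \leq 2\pi - x$ together with $x \geq \pi$ gives $2\pi - x \leq \pi$, hence $y \leq \pi$. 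In either case $\pi - y \geq 0$, which is precisely $\xi_1(x,y) \geq 0$.

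Since the argument is a one-line observation on each of two regions, there is no genuine obstacle. The only care needed is to not conflate the two pieces of $A_1$: the upper bound $y \leq \pi$ arises from the condition $y \leq x$ on the left half ($x \leq \pi$) but from the condition $y \leq 2\pi - x$ on the right half ($x \geq \pi$). Once this is said, the proof closes immediately, and one may further note (though not needed here) that equality $\pi - y = 0$ can only occur on the common boundary $y = \pi$, which intersects $A_1$ solely at the point $(\pi,\pi)$.
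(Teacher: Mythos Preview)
Your proof is correct and matches the paper's approach: the paper simply labels the claim ``trivial'' and gives no further argument, so your explicit check that $y\le\pi$ on each piece of $A_1$ is exactly the intended verification.
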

Thus, in this region the corresponding absolute value may simply be dropped and 
the orbital derivative is written as
\begin{equation}\label{eq_Vdot2}
 \dot{V}\left(  x,y\right)
  =\left\vert \xi_{2} \left(  x,y\right) \right\vert - \xi_{1} \left(  x,y\right) +\left\vert
\xi_{4} \left(  x,y\right)  \right\vert - \left\vert
\xi_{3} \left(  x,y\right)  \right\vert \text{.}
\end{equation}

\begin{lemma}\label{lemma2}
 We have, 
for all $(x,y) \in  A_1$,
\[
\xi_{2}\left(  x,y\right)  =\pi-y-2a\sin y-a\sin x\geq0\text{.}
\]
\end{lemma}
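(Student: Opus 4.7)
The plan is to use Lemma~\ref{lemma1} (which gives $\pi - y \geq 0$ in $A_1$) together with the geometric constraints defining $A_1$ to establish, in the worst case, the bound $\xi_2(x,y) \geq (\pi - y)(1 - 3a)$, which is nonnegative since $a < 1/6 < 1/3$. The key idea is that while the penalty terms $-2a \sin y$ and $-a\sin x$ could in principle compete with the main term $\pi - y$ near the sink $(\pi,\pi)$, the shape of $A_1$ forces $|\sin x|$ and $\sin y$ themselves to be small multiples of $\pi - y$ in exactly that region.

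First, I would split $A_1$ into its two defining pieces, $P_1=\{0\le x\le\pi,\ \max(-x,x-\pi)\le y\le x\}$ and $P_2=\{\pi\le x\le 2\pi,\ \max(\pi-x,x-2\pi)\le y\le 2\pi-x\}$, and then split each further according to the sign of $y$. Across all cases Lemma~\ref{lemma1} gives $\pi - y \geq 0$.

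Second, for $y\ge 0$ I would use the elementary bound $\sin y=\sin(\pi-y)\le\pi-y$ (valid on $[0,\pi]$), yielding $-2a\sin y\ge -2a(\pi-y)$. For $y<0$ the term $-2a\sin y$ is nonnegative and contributes a positive slack, so that subcase is immediate from $\pi-y>\pi$ and $|a\sin x|\le a<1/6$.

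Third, I would bound the $-a\sin x$ term using the geometric constraints. In $P_1$ with $y\ge 0$, the inequalities $y\le x\le\pi$ yield $\sin x=\sin(\pi-x)\le\pi-x\le\pi-y$, so $-a\sin x\ge -a(\pi-y)$. Combining with the previous step gives
\[
\xi_2\ \ge\ (\pi-y)\bigl(1-2a-a\bigr)\ =\ (\pi-y)(1-3a)\ \ge\ 0.
\]
In $P_2$ with $y\ge 0$, the constraint $\pi\le x\le 2\pi-y$ forces $\sin x\le 0$, so $-a\sin x\ge 0$ is a bonus and one gets the stronger estimate $\xi_2\ge(\pi-y)(1-2a)\ge 0$.

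The main obstacle is the corner of $A_1$ near the fixed point $(\pi,\pi)$, because both $\pi-y$ and the trigonometric penalties vanish to first order there and the naive bound $|\sin x|\le 1$ is hopelessly wasteful. The argument only works because the $A_1$-constraint $x\ge y$ in $P_1$ (and $x\le 2\pi-y$ in $P_2$) ties $\sin x$ linearly to $\pi-y$; verifying this linkage carefully in each subcase is the technical heart of the proof, and also explains exactly why the threshold $a<1/6$ (rather than just $a<1/3$) will be needed once the analogous estimate for $\xi_4$ is combined with this one.
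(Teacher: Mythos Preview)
Your proof is correct and takes a genuinely different route from the paper. The paper argues indirectly: since $\partial_y\xi_2 = -1 - 2a\cos y$ never vanishes for small $a$, the function $\xi_2$ has no interior critical points in $A_1$, so its minimum lies on the boundary; they then parametrize each of the six boundary segments of $A_1$ and check, one by one, that $\xi_2\ge 0$ there, with equality only at $(\pi,\pi)$. You instead give a direct pointwise estimate: using $\sin t\le t$ for $t\ge 0$ together with the defining constraints of $A_1$ (namely $y\le x\le\pi$ in $P_1$ and $\pi\le x\le 2\pi-y$ in $P_2$) you tie both $\sin y$ and $|\sin x|$ to $\pi-y$ and obtain $\xi_2\ge(\pi-y)(1-3a)$ with no boundary case analysis at all. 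Your argument is shorter, yields an explicit quantitative lower bound, and makes transparent why the inequality degenerates precisely at $(\pi,\pi)$; the paper's compactness-plus-boundary method is more mechanical but would transfer more readily to situations where no such convenient linear comparison with $\pi-y$ is available.
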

\begin{proof}
This follows from straightforward computations according to the following strategy.  
The function  $\xi_{2}$ must attain a minimum in the compact set $A_1$. 
We will show that $\xi_{2}$ has no interior extremal points and therefore 
the absolute minimum of $\xi_{2}$ must lie on the boundary of $A_1$. This global minimum 
is then shown
to be zero, implying that $\xi_{2}\left(  x,y\right)\geq 0$ throughout $A_1$.

Consider the possible critical points of $\xi_{2}\left(  x,y\right)  $ in $A_1$. 
The stationarity equations are
\begin{equation*}
\left\{
\begin{aligned}
\partial_{x}\xi_{2}\left(  x,y\right)   & =-a\cos x=0 \\
\partial_{y}\xi_{2}\left(  x,y\right)   & =-1-2a\cos y= 0.
\end{aligned}
\right.
\end{equation*}

%
%
\noindent Since $0<a<\frac{1}{6}$ the last equation has no solutions, implying there are 
no critical points of $\xi_{2}\left(  x,y\right)  $ in the interior of $A_1$. Thus the extrema of 
$\xi_{2}\left(  x,y\right)  $ must lie on the boundary of $A_1$. 

Referring to Fig.~\ref{Fig1}, we see that the boundary of $A_1$ is formed by 
 six line segments, on each of which we now proceed to analyse the behaviour of $\xi_{2}$.

\begin{enumerate}
\item{}\label{item:1} {The top left boundary of $A_1$ is the line segment  
parametrized by $x\in\left[  0,\pi\right]$ and $y=x$. We have 
\[
\left.  \xi_{2}\left(  x,y\right)  \right\vert _{y=x} \equiv g_1\left(  x\right)
=\pi-x-3a\sin x. 
\]
Since 
\[g_1^{\prime}\left(  x\right)
=-1-3a\cos x=0\Leftrightarrow\cos x=-\frac{1}{3a}\]
has no solutions, it follows that there are no local extrema in this set, and the extrema
are in the endpoints of the segment. Now $g_1$ is decreasing along this line segment, with $g_1\left(  0\right)=\pi $, 
and $g_1\left(  \pi\right)=0 $; these are the extrema of $g_1$ on this segment. 
This implies that $g_1\left(  \pi\right)=0 $ is the minimum of $\xi_{2}$ 
on the top left boundary of $A_1$.}

\item{}{The top right boundary segment of $A_1$ is 
parametrized by $x\in\left[ \pi,2\pi\right]$ and $y=2\pi-x$. We have
\[
\left.  \xi_{2}\left(  x,y\right)  \right\vert _{y=2\pi-x} \equiv g_2\left(  x\right)=x-\pi+a\sin x\text{.}
\]
Since 
\[g_2^{\prime}\left(  x\right)
=1+a\cos x=0\Leftrightarrow\cos x=-\frac{1}{3a}\]
has no solutions, it follows that there are no local extrema in this set, and the extrema
are in the endpoints of the segment. 
Thus, $g_2$ is increasing along this line segment, with $g_2\left( \pi\right)=0 $, and $g_2\left(  2\pi\right)=\pi $; these are the extrema of $g_2$ on this segment. 
This implies that $g_2\left(  \pi\right)=0 $ is the minimum value of $\xi_{2}$ on the top right boundary segment of $A_1$, assumed at the point $(\pi,\pi)$.
}

\item{}{
The bottom far left 
boundary of $A_1$ is the line segment  
parametrized by $x\in\left[  0,\frac{\pi}{2}\right]$ and $y=-x$. We have 
\[
\left.  \xi_{2}\left(  x,y\right)  \right\vert _{y=-x}=\pi+x+a\sin x>0\text{,
} 
\]
and therefore $\xi_{2}\left(  x,y\right)$ is trivially positive in this line segment.
}

\item{}{The adjacent segment (middle left) in the bottom boundary of $A_1$ is parametrized by
$x\in\left[  \frac{\pi}{2},\pi\right]$ and $y=x-\pi$.
\[
\left.  \xi_{2}\left(  x,y\right)  \right\vert _{y=x-\pi}=2\pi-x+a\sin x>0\text{,
}
\]
from which we conclude immediately that  $\xi_{2}\left(  x,y\right)$ is positive in this line segment.}
 
\item{}{
The adjacent segment (middle right) in the bottom boundary  $A_1$  is parametrized by $x\in\left[ \pi,\frac{3\pi}{2} \right]$ and $y=\pi-x$.
\[
\left.  \xi_{2}\left(  x,y\right)  \right\vert _{y=\pi-x}=x-3a\sin x>0\text{,
} 
\]
from which it follows that $\xi_{2}\left(  x,y\right)$ is trivially positive in this line segment. 
}
\item{}
{The last segment on the bottom boundary of $A_1$, on the far right, is parametrized by
 $x\in\left[ \frac{3\pi}{2},2\pi\right]$  and $y=x-2\pi$.  in the bottom of $A_1$. On this segment we have
\[
\left.  \xi_{2}\left(  x,y\right)  \right\vert _{y=x-2\pi}=3\pi-x-3a\sin x>0\text{,
} 
\]
from which it follows that  $\xi_{2}\left(  x,y\right)$ is trivially positive on this line segment. We may also conclude that the maximum of $\xi_{2}\left(  x,y\right)$ is attained at $\left( \frac{3\pi}{2},-\frac{\pi}{2}\right)$, and its value is $\frac{3\pi}{2}+3a$.
}
\end{enumerate}

We conclude that the absolute minimum of  $\xi_{2}\left(  x,y\right)$ in $A_1$ is zero and is 
attained only at $\left( \pi,\pi\right)$.
\end{proof}

\vspace{2mm} 
Summing up, Lemma~\ref{lemma2} states that 
\[
\xi_{2}\left(  x,y\right) \geq0 \text{ in } A_1,
\]
with $\xi_{2}\left(  x,y\right) = 0$ if and only if $\left(  x,y\right)  =\left(  \pi,\pi\right)  $. 
Recalling expression \eqref{eq_Vdot2} for the orbital derivative together with Lemma~\ref{lemma2}, we have, for $(x,y) \in A_1$,
\[
\begin{split} \dot{V}\left(  x,y\right)   & =\xi_{2}\left(  x,y\right)-\xi_{1}\left(  x,y\right)+\left\vert \xi_{4}\left(  x,y\right) \right\vert -\left\vert \xi_{3}\left(  x,y\right) \right\vert \\
 & =-2a\sin y-a\sin x+\left\vert \xi_{4}\left(  x,y\right) \right\vert -\left\vert \xi_{3}\left(  x,y\right) \right\vert .
\end{split}
\]

\begin{figure}
[ptb]
\includegraphics[
height=3in,
width=4in,center
]%
{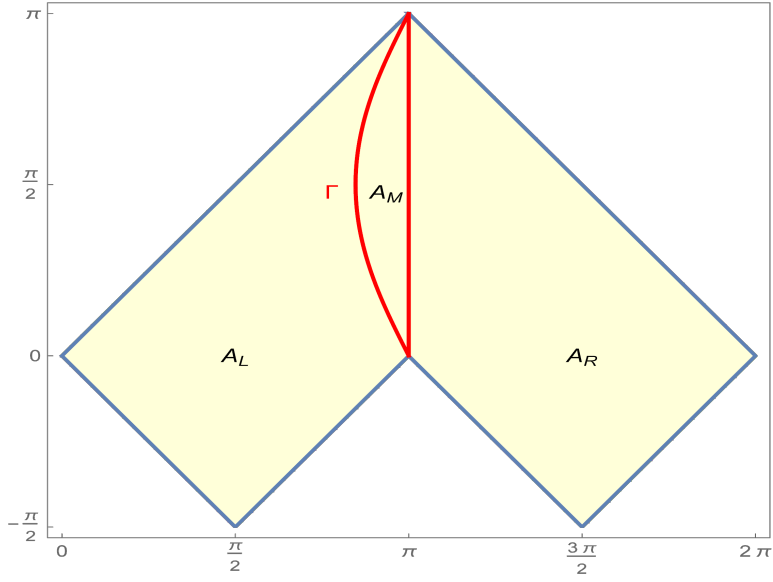}%
\caption{Detail of the region $A_1$ and its subdivisions, where the three possible combinations of
signs of the absolute values of $\xi_{3} $ and $\xi_{4}$ in the orbital derivative can occur.}%
\label{Fig2}%
\end{figure}
For the analysis of the signs of $\xi_{3}$ and $\xi_{4}$, it becomes necessary to further split $A_1$  into subregions.

\begin{definition}\label{def:regions} 
Define the two following subregions $A^-\, $,  $A^+$ of $A_1$: 
 \begin{align*}
  A^- &= \{ (x,y) \in A_1 : 0 \leq x \leq \pi \}, \\
 A^+   &= \{ (x,y) \in A_1 : \pi \leq x \leq 2\pi \}.
\end{align*}
\end{definition}

\begin{lemma}\label{lemma3}
The function $\xi_{3}=\pi-x$ is non-negative in $A^-$ and non-positive in $A^+$.
\end{lemma}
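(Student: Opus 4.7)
The statement is essentially a direct consequence of the way the subregions $A^-$ and $A^+$ were defined in Definition~\ref{def:regions}: they partition $A_1$ precisely along the vertical line $x=\pi$, which is exactly the zero set of $\xi_3$. My plan is therefore to simply unwind the definitions rather than invoke any of the optimization machinery used in Lemma~\ref{lemma2}.

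First, I would recall that $\xi_3(x,y) = \pi - x$ depends only on the $x$-coordinate. Then, for a point $(x,y)\in A^-$, the defining inequality $0 \leq x \leq \pi$ gives $\pi - x \geq 0$, so $\xi_3(x,y) \geq 0$, with equality precisely when $x = \pi$ (that is, on the right edge of $A^-$, which is the common boundary with $A^+$). Symmetrically, for $(x,y) \in A^+$ the inequality $\pi \leq x \leq 2\pi$ gives $\pi - x \leq 0$, with equality again on the line $x=\pi$. This completes the proof.

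There is no real obstacle here: the lemma is a bookkeeping step whose purpose is to license dropping the absolute value $|\xi_3|$ in the expression \eqref{eq_Vdot2} for $\dot{V}$, with the appropriate sign, on each of the two subregions $A^-$ and $A^+$ separately. The substantive work will come in the subsequent lemma handling $\xi_4$, which, unlike $\xi_3$, is not a coordinate function and will presumably require the same boundary-analysis strategy used in Lemma~\ref{lemma2}.
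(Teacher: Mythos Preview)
Your proposal is correct and matches the paper's own proof, which likewise treats this as a trivial check from the definition of $A^-$ and $A^+$ via the inequality $0\le x\le \pi$ (resp.\ $\pi\le x\le 2\pi$). Your write-up is in fact slightly cleaner than the paper's, but the substance is identical.
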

\proof This is a trivial check: we have%
\[
-\left\vert \xi_{3}\right\vert =-\pi+x   \leq 0 \ \text{ for } (x,y) \in A^-  
\]
and
\[
-\left\vert \xi_{3}\right\vert =\pi-x \geq 0 \     \text{ for } (x,y) \in A^+  \text{.}
\]
\qed 

\begin{remark}\label{rem:2}
It is convenient to write the expression of the orbital derivative $\dot{V}$
in each of the two subregions $A^-$ and $A^+$:

\begin{equation*}
 \dot{V}\left(  x,y\right) = \left\{
\begin{aligned}
-2a\sin y-a\sin x-\pi+x+\left\vert \xi_{4}\left(  x,y\right) \right\vert, 
 & \ \ \ (x,y) \in A^- \text{,} \\
-2a\sin y-a\sin x+\pi-x+\left\vert \xi_{4}\left(  x,y\right) \right\vert, & \ \ \ (x,y) \in A^+ \text{.}
\end{aligned}
\right.
\end{equation*}
\end{remark}

To complete the analysis of the signs of the terms inside the absolute values in the orbital derivative, we now tackle  the remaining term 
$\xi_{4}\left(  x,y\right)  =\pi-x-2a\sin x-a\sin y$. Here the analysis is more subtle; in order to perform it it is necessary to define the regions where $\xi_{4}\left(  x,y\right)$ does not change sign.

\begin{lemma}\label{lemma:implicit}
The equation 
\begin{equation}\label{eq:implicit}
\xi_{4}\left(  x,y\right)=0
\end{equation}
defines implicitly a unique analytic function $x=\gamma \left( y \right)$ such that the curve 
$\Gamma$ parametrized by $(\gamma(y), y), \ 0 \leq y \leq \pi$, is an analytic curve 
connecting the fixed points $\left( \pi,0 \right)$ and $\left( \pi,\pi \right)$ of the system \eqref{Eq3}.
\end{lemma}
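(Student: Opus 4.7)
The plan is to apply the analytic implicit function theorem globally, exploiting the strict monotonicity of $\xi_4$ in the variable $x$ that follows from the parameter constraint $0<a<1/6$.

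First, I would verify the two endpoint conditions by direct substitution: $\xi_4(\pi,0) = \pi-\pi - 2a\sin\pi - a\sin 0 = 0$ and likewise $\xi_4(\pi,\pi) = 0$, so the candidate curve does begin at $(\pi,0)$ and end at $(\pi,\pi)$. Next I would compute the partial derivative
\[
\partial_x \xi_4(x,y) = -1 - 2a\cos x,
\]
and use the hypothesis $0<a<1/6$ to obtain the uniform bound $\partial_x \xi_4(x,y) \leq -1+2a < -2/3 < 0$ for every $(x,y)\in\mathbb{R}^2$. In particular, $\partial_x \xi_4$ never vanishes, so the real-analytic implicit function theorem applies at every zero of $\xi_4$.

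To upgrade this local statement into a global analytic function $\gamma:[0,\pi]\to\mathbb{R}$, I would fix $y\in[0,\pi]$ and study the one-variable map $x\mapsto \xi_4(x,y)$. By the monotonicity just established it is strictly decreasing, hence injective, so it has at most one zero. Existence of a zero on the interval follows from the intermediate value theorem applied between, say, $x=0$, where $\xi_4(0,y)=\pi - a\sin y \geq \pi - a > 0$, and $x=2\pi$, where $\xi_4(2\pi,y) = -\pi - a\sin y \leq -\pi+a < 0$ (using $\sin y \geq 0$ on $[0,\pi]$). This yields, for each $y\in[0,\pi]$, a unique real number $\gamma(y)$ with $\xi_4(\gamma(y),y)=0$. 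The implicit function theorem then promotes $\gamma$ to a real-analytic function on a neighbourhood of each $y\in[0,\pi]$; since the local solutions agree with the unique global one $\gamma(y)$ just defined, they patch together to give a single real-analytic function $\gamma$ on $[0,\pi]$ (with analyticity at the endpoints interpreted in the usual one-sided sense, or by extending to an open neighbourhood of $[0,\pi]$).

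Finally, I would read off the endpoint values from the uniqueness in the previous step: since $\xi_4(\pi,0)=0=\xi_4(\pi,\pi)$, uniqueness of the solution forces $\gamma(0)=\gamma(\pi)=\pi$. Thus the analytic curve $\Gamma=\{(\gamma(y),y):0\leq y\leq\pi\}$ indeed connects the fixed points $(\pi,0)$ and $(\pi,\pi)$ of \eqref{Eq3}, completing the proof. No step here is conceptually hard; the only place where one must be a little careful is ensuring the bracketing bounds used in the IVT work uniformly in $y\in[0,\pi]$, and this is precisely where the sign condition $\sin y \geq 0$ on that interval enters.
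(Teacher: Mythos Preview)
Your proof is correct and follows essentially the same strategy as the paper: use $\partial_x\xi_4=-1-2a\cos x<0$ to get strict monotonicity in $x$, invoke the intermediate value theorem for existence and uniqueness of $\gamma(y)$, and then apply the analytic implicit function theorem and patch the local solutions. The only cosmetic differences are that you bracket the zero between $x=0$ and $x=2\pi$ whereas the paper argues via $\lim_{x\to\pm\infty}\xi_4=\mp\infty$, and you make the endpoint verification $\gamma(0)=\gamma(\pi)=\pi$ explicit.
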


\begin{proof}
Fixing a value of $y=y_0 \in \left[0,\pi\right]$, we consider the left hand side of \eqref{eq:implicit} as a function of $x$: 

\[
H_{y_0}\left( x \right)=\pi-x-2a\sin x-a\sin y_0.
\]
We then have
\begin{equation}
\frac{dH_{y_0}\left(  x\right) }{dx}\ =-1-2a\cos
x<0 
\label{eq_derivative0}
\end{equation}
for all $y_0 \in \left[  0,\pi\right]$, showing that $H_{y_0}   \left( x \right)$ is a strictly decreasing 
function of $x$. Moreover, $\lim_{x\rightarrow \pm \infty} H_{y_0}   \left( x \right)=\mp \infty$. Therefore, it follows from the Intermediate Value Theorem that the equation 
$H_{y_0}(x) = 0$ has, for each $y_0$, a unique solution, which we denote by $x_0 = \gamma(y_0)$.
From this fact and \eqref{eq_derivative0}, we may apply the analytic Implicit Function Theorem locally at the
point $(x_0, y_0)$. The fact that both conditions hold for all $y_0 \in \left[0,\pi\right]$ implies that 
these local functions join smoothly to define a unique analytic function 
$x=\gamma \left( y \right), \ 0 \leq y \leq \pi$. Thus the curve $\Gamma$ parametrized by 
$(\gamma(y), y), \ 0 \leq y \leq \pi$ is an analytic curve 
connecting the fixed points $\left( \pi,0 \right)$ and $\left( \pi,\pi \right)$, as stated. 
\end{proof}

\begin{remark} \label{rem:gamma} 
Analysis of the function $x=\gamma\left( y \right)$ shows that it admits a minimum at 
$y=\frac{\pi}{2}$. This minimum is less than $\pi$ since $x=\gamma\left( y \right)$ is decreasing with $y$.
This implies that the curve $\Gamma$ 
lies on the region $A^-$. 
\end{remark}

Remark~\ref{rem:gamma} allows us to decompose the region $A^-$ into the 
following regions of interest from the point of view of constancy of sign of $\xi_4$.

\begin{definition}\label{def:regionsLR} 
 
Define the two following subregions $A_L$ and  $A_M$ of $A_1$: 
 
\begin{align*}
& A_L=\left\{ \left( x,y \right) \in A_1:
x \leq \gamma \left( y \right) \right\}, 
\\
& A_M=\left\{ \left( x,y \right) \in A_1:
 \gamma \left( y \right)\leq x \leq \pi \right\}.
\end{align*}
To maintain the consistency of notation we rename $A^+$ to $A_R$.
\end{definition}

\begin{remark}
\label{rem:1}
The two subregions $A_L$ and $A_M$ are in fact subsets of $A^-$; indeed $A^- = A_L \cup A_M$, and furthermore
$A_L$ and $  A_M$ intersect along the curve $\Gamma$.
Thus $A_1= A_L\cup A_M \cup A_R$. See Fig.~\ref{Fig2}.
\end{remark}

The next result completes the analysis of the signs of $\xi_{4}$. 

\begin{lemma}
\label{lemma5}
We have:
\begin{enumerate}
\item $\xi_{4}(x,y) \geq 0$ in  $A_L$;
\item $\xi_{4}(x,y) \leq 0$ in  $A_M \cup A_R$.
\end{enumerate}
\end{lemma}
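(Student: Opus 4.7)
The plan is to exploit the strict monotonicity of $\xi_4(\cdot,y)$ in the $x$-variable, which was already established in the proof of Lemma~\ref{lemma:implicit}: since $\partial_x \xi_4(x,y) = -1 - 2a\cos x \leq -1 + 2a < 0$ for $a < 1/6$, the map $x \mapsto \xi_4(x,y)$ is strictly decreasing for every fixed $y$. Combined with the defining identity $\xi_4(\gamma(y),y)=0$, this yields the global equivalence
\[
\xi_4(x,y) \geq 0 \iff x \leq \gamma(y).
\]

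With this in hand, part (1) is immediate: on $A_L$ we have $x \leq \gamma(y)$ by definition, so $\xi_4(x,y) \geq 0$. Likewise, on $A_M$ we have $\gamma(y) \leq x \leq \pi$, so $\xi_4(x,y) \leq 0$. The work lies in proving $\xi_4 \leq 0$ on $A_R$, where the defining inequalities do not reference $\gamma$ at all. I would split $A_R$ by the sign of $y$. When $y \in [0,\pi]$, evaluating $\xi_4(\pi,y) = -a\sin y \leq 0$ and invoking monotonicity in $x$ gives $\gamma(y) \leq \pi$, so the bound $x \geq \pi$ which defines $A_R$ automatically forces $x \geq \gamma(y)$ and hence $\xi_4 \leq 0$.

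The genuinely delicate sub-case is $y < 0$. Here $\xi_4(\pi,y) = -a\sin y > 0$, so $\gamma(y) > \pi$ and the crude bound $x \geq \pi$ is no longer sufficient. I would instead exploit the constraint $y \geq \pi - x$ defining the lower boundary of $A_R$, which rewrites as $x \geq \pi - y$, and show that $\gamma(y) \leq \pi - y$ on this range. Using $\sin(\pi - y) = \sin y$, one computes
\[
\xi_4(\pi - y, y) = y - 2a\sin y - a\sin y = y - 3a\sin y.
\]
Setting $f(y) = y - 3a\sin y$, we have $f(0) = 0$ and $f'(y) = 1 - 3a\cos y \geq 1 - 3a > 0$ for $a < 1/6$, so $f$ is strictly increasing and therefore $f(y) < 0$ for $y < 0$. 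Monotonicity of $\xi_4(\cdot,y)$ then upgrades $\xi_4(\pi - y, y) < 0$ to $\gamma(y) < \pi - y$, and consequently $x \geq \pi - y > \gamma(y)$ on $A_R \cap \{y < 0\}$, giving $\xi_4(x,y) < 0$ as required.

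The main obstacle, as indicated, is this last sub-case: the curve $\Gamma$ exits the region $A^-$ once $y$ becomes negative, so one cannot rely on the geometric picture of $\Gamma$ separating $A_L$ from $A_M \cup A_R$. The auxiliary estimate $f(y) = y - 3a\sin y \leq 0$, which says that $\Gamma$ lies to the left of the slanted line $x + y = \pi$ bounding $A_R$ from below-left, is precisely what is needed to extend the monotonicity argument across this region.
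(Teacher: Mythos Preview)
Your argument is correct and is genuinely different from the paper's. The paper proceeds by an extremum principle: it observes that $\xi_4$ has no interior critical points in $A_1$, and then checks the sign of $\xi_4$ on every boundary segment of $A_L$ and of $A_M\cup A_R$ separately (six line segments plus the curve $\Gamma$), concluding that the extrema lie on $\Gamma$ where $\xi_4=0$. You instead leverage the strict monotonicity of $x\mapsto\xi_4(x,y)$ directly, reducing each sign question to a single pointwise evaluation per horizontal slice. This is shorter and conceptually cleaner; the paper's boundary sweep is more elementary but repetitive.

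Two small remarks. First, you invoke $\gamma(y)$ for $y<0$, whereas Lemma~\ref{lemma:implicit} only constructs $\gamma$ on $[0,\pi]$; either note that the same implicit-function argument extends $\gamma$ to all $y$, or simply bypass $\gamma$ in that sub-case, since monotonicity plus $\xi_4(\pi-y,y)<0$ and $x\ge\pi-y$ already give $\xi_4(x,y)\le\xi_4(\pi-y,y)<0$ directly. Second, the inequality $y\ge\pi-x$ is not literally ``the lower boundary of $A_R$'' on all of $A_R$: for $x\in[\tfrac{3\pi}{2},2\pi]$ the binding constraint is $y\ge x-2\pi$. However, since $x-2\pi\ge\pi-x$ on that range, the weaker bound $y\ge\pi-x$ still holds throughout $A_R$, so your estimate goes through unchanged.
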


\begin{proof} We begin by noting that,  by a similar reasoning to  Lemma~\ref{lemma2}, $\xi_{4}\left(  x,y\right)  $ has no
critical points inside $A_1$. The problem thus reduces to the analysis of $\xi_{4}$ on the 
boundaries of the relevant regions, which now are $A_L$ and $A_M \cup A_R$. 

The regions $A_L$ and $A_M \cup A_R$ have as common boundary the curve $\Gamma$
constructed in Lemma~\ref{lemma:implicit}. Note that we already know that $\xi_{4} \equiv 0$ 
along $\Gamma$, and consequently we only have to consider the three boundary line segments 
for each region. 

We now proceed to prove the first statement.

\begin{enumerate}

\item The top  boundary of $A_L$ is the line segment  
parametrized by $x\in\left[  0,\pi\right]$ and $y=x$. We have, for all $x\in\left[  0,\pi\right] $, 
\[
\left.  \xi_{4}\left(  x,y\right)  \right\vert _{y=x}=\pi-x-3a\sin
x\geq0\text{. }
\]
This is exactly the same situation of Lemma~\ref{lemma2} (\ref{item:1}). The analogous calculations
lead to the conclusion that the function $\xi_{4}$ is non-negative on this segment,
 the only zero being at $(\pi,\pi)$.\\ 

\item{} The bottom left boundary segment of $A_L$ is 
parametrized by $x\in\left[  0,\frac{\pi}{2}\right]$ and $y=-x$. We then trivially have
\[
\left.  \xi_{4}\left(  x,y\right)  \right\vert _{y=-x}=\pi-x-a\sin x>0\text{.
} 
\]

\item{} The bottom right boundary segment of $A_L$ is 
parametrized by $x\in\left[  \frac{\pi}{2},\pi\right] $ and $y=x-\pi$. We then have
\[
\left.  \xi_{4}\left(  x,y\right)  \right\vert _{y=x-\pi}=\pi-x-a\sin
x\geq0\text{, } 
\]
with equality if and only if $x=\pi$.
\end{enumerate}

From (1)-(3) it follows that in $A_L$ there exists a line of minimizers along the curve 
$\Gamma$ with absolute minimum zero. This implies 
\[
\xi_{4}\left( x,y \right)\geq 0 \text{ in } A_L, \] 
concluding the proof of the first statement.

To prove the second statement, we concentrate on the set $A_M\cup A_R$.

\begin{enumerate}
 \item The top  boundary of $A_M\cup A_R$ is the line segment  
parametrized by $x\in\left[ \pi,2\pi \right] $ and $y=2\pi-x$. We then have
\[
\left.  \xi_{4}\left(  x,y\right)  \right\vert _{y=2\pi-x}=\pi-x-a\sin
x\leq0\text{, } 
\]
the derivative is $-1-a \cos x$. Thus, the function $\xi_{4}$ is non-positive on this segment with a unique zero at $(\pi,\pi)$.\\

\item{} The bottom left boundary segment of $A_M\cup A_R$ is   
parametrized by
 $x\in\left[  \pi,\frac{3\pi}{2}\right]$ and $y=\pi-x$. We have 
\[
\left.  \xi_{4}\left(  x,y\right)  \right\vert _{y=\pi-x}=\pi-x-3a\sin x\leq 0\text{,
} 
\]
with a unique zero at $(\pi,0)$ and again decreasing with $x$.
\item{} The bottom left boundary segment of $A_M\cup A_R$ is   
parametrized by
 $x\in\left[  \frac{3\pi}{2},2\pi\right] $ and $y=x-2\pi$. We have trivially
\[
\left.  \xi_{4}\left(  x,y\right)  \right\vert _{y=x-2\pi}=\pi-x-3a\sin
x<0.
\]
\end{enumerate}
We conclude that in $A_M\cup A_R$ there exists a line of maximizers of $\xi_{4}$ along the
curve $\Gamma$ with the maximum value zero. This concludes the proof of the second statement,
finishing the proof of the lemma.

\end{proof}

\subsection{Sign of the orbital derivative}
\begin{theorem}
\label{thm_sign_A1}
The orbital derivative is non-positive in \( A_1 \). Specifically, it is zero exactly at the lower edges of \( A_1 \) and at the fixed point \( (\pi, \pi) \), while it is negative at all other points in \( A_1 \).
\end{theorem}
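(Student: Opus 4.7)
The plan is to exploit Lemmas 1 through 5, which have already resolved the signs of $\xi_1,\xi_2,\xi_3,\xi_4$ on the subregions $A_L$, $A_M$ and $A_R$, and reduce $\dot V$ to an explicit elementary expression on each piece. Using $\xi_1,\xi_2\geq 0$ on all of $A_1$ (Lemmas 1 and 2), the telescoping cancellation $\xi_2-\xi_1=-2a\sin y-a\sin x$ holds throughout $A_1$, and the only remaining step is to sort out the $|\xi_3|$ and $|\xi_4|$ contributions on the three subregions using Lemmas 3 and 5. A direct substitution yields
\[
\dot V(x,y)=\begin{cases}-3a(\sin x+\sin y),& (x,y)\in A_L,\\[2pt] a(\sin x-\sin y)+2(x-\pi),& (x,y)\in A_M,\\[2pt] a(\sin x-\sin y),& (x,y)\in A_R.\end{cases}
\]
With these three formulas in hand, the theorem reduces to a sign analysis in each piece, plus a consistency check along the interfaces $\Gamma$ and the segment $x=\pi$, where the formulas have to agree.

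I would handle $A_L$ and $A_R$ by the sum-to-product identities $\sin x+\sin y=2\sin\tfrac{x+y}{2}\cos\tfrac{x-y}{2}$ and $\sin x-\sin y=2\cos\tfrac{x+y}{2}\sin\tfrac{x-y}{2}$. In $A_L$ one has $0\leq x\leq\pi$ and $y\in[-x,x]\cup[x-\pi,x]$, which forces $\tfrac{x+y}{2}\in[0,\pi]$ and $\tfrac{x-y}{2}\in[0,\tfrac{\pi}{2}]$, so both factors are $\geq 0$ and the product vanishes exactly on the bottom edges $y=-x$ and $y=x-\pi$ of $A_1$. In $A_R=A^+$, the analogous bookkeeping on $\pi\leq x\leq 2\pi$ and the corresponding range of $y$ gives $\cos\tfrac{x+y}{2}\leq 0$ and $\sin\tfrac{x-y}{2}\geq 0$, and the zeros are located precisely on the bottom edges $y=\pi-x$ and $y=x-2\pi$, with the remaining top-edge zeros occurring only at fixed points.

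For the middle sliver $A_M$, which is the main obstacle because its boundary contains the implicit curve $\Gamma$ constructed in Lemma 6, I would use a maximum-principle argument. The gradient $\nabla\dot V=(a\cos x+2,-a\cos y)$ has no zero in $A_M$, since $|a\cos x|\leq a<\tfrac{1}{6}<2$; hence the maximum of $\dot V$ on the compact set $A_M$ is attained on $\partial A_M$, which consists of the vertical segment $x=\pi$ (where $\dot V=-a\sin y\leq 0$ for $y\in[0,\pi]$) and the curve $\Gamma$. On $\Gamma$ one has $\xi_4=0$, i.e.\ $\pi-x=2a\sin x+a\sin y$; substituting this into the $A_M$ expression for $\dot V$ collapses it to $-3a(\sin x+\sin y)$, the same formula as on the $A_L$-side, so the two subregions glue consistently across $\Gamma$ and $\dot V\leq 0$ there as well, with equality only at $(\pi,0)$ and $(\pi,\pi)$. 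Since no interior critical points exist, $\dot V<0$ in the interior of $A_M$, and assembling the three cases produces exactly the stated zero set: the four lower edges of $A_1$ together with the fixed point $(\pi,\pi)$.
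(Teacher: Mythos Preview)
Your proof is correct and follows essentially the same strategy as the paper: use Lemmas~1--5 to obtain the identical piecewise formula for $\dot V$ on $A_L$, $A_M$, $A_R$, rule out interior extrema on $A_M$, check the boundary pieces (including the gluing along $\Gamma$ and the segment $x=\pi$), and read off the zero set. The only difference is that on $A_L$ and $A_R$ you establish the sign directly via the sum-to-product identities rather than by the paper's ``no interior critical points, then walk each boundary edge'' routine; this is a mild streamlining, not a different approach (one small wording slip: in $A_L$ the factor $\sin\tfrac{x+y}{2}$ also vanishes at the corner $(\pi,\pi)$, not only on the two bottom edges, but you recover this point in the final assembly).
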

\begin{proof}

To compute the orbital derivative, we recall Remark~\ref{rem:2}. We substitute the findings of Lemma~\ref{lemma5} into the statements of the mentioned remark. Now, we consider three cases corresponding to the regions \( A_L \), \( A_M \), and \( A_R \).
\begin{enumerate}
\item{} We consider the region $A_L$, the orbital derivative is
\[
\begin{split} \dot{V}\left(  x,y\right)    & =-2a\sin y-a\sin x-\pi+x+ \xi_{4}\left(  x,y\right)\\
& =-3a\sin y-3a\sin x \text{;}
\end{split}
\]
\item{} in the region $A_M$, the orbital derivative is
\[
\begin{split} \dot{V}\left(  x,y\right)    & =-2a\sin y-a\sin x-\pi+x- \xi_{4}\left(  x,y\right)\\
&=2x-2\pi-a\sin y+a\sin x
 \text{;}
\end{split}
\]
\item{} in the region $A_R$, the orbital derivative is
\[
\begin{split} \dot{V}\left(  x,y\right)    & =-2a\sin y-a\sin x+\pi-x- \xi_{4}\left(  x,y\right) \\
& =-a\sin y+a\sin x \text{.}
\end{split}
\]
\end{enumerate}
\begin{enumerate}
\item{} Let us consider the region
$A_{L}$. Here we have
\[
\dot{V}\left(  x,y\right)  =-3a\sin y-3a\sin x.
\]
There are no local extrema in the interior of $A_L$ since the stationarity system
\[
\begin{split}
\partial_x \dot{V}\left(  x,y\right)=-3a\cos x = 0\\
\partial_y \dot{V}\left(  x,y\right)=-3a\cos y = 0
\end{split}
\]
has  the only solutions  $(\frac{\pi}{2},-\frac{\pi}{2})$ and $(\frac{\pi}{2},\frac{\pi}{2})$, which lie in the boundary of $A_L$. 
The values of $\dot{V}$ at those points are
\[
\begin{split}
& \dot{V}\left( \frac{\pi}{2},\frac{\pi}{2}\right)=-6a \\
& \dot{V}\left( \frac{\pi}{2},-\frac{\pi}{2}\right)=0.
\end{split}
\]
By inspection we see that the orbital derivative is zero in
$\left(  \pi,\pi\right)  $.
We assess now the boundary of $A_L$, formed by three line segments and  $\Gamma$.
\begin{enumerate}
\item {} On the lower left edge of $A_L$, $x\in \left[ 0, \frac{\pi}{2} \right] $ and $y=-x$, we have
\begin{align*}
& \left.  \dot{V}\left(  x,y\right)  \right\vert _{y=-x}
 =3a\sin x-3a\sin x=0.
\end{align*}
\item {} On the lower right edge of $A_L$,  $x\in \left[  \frac{\pi}{2} ,\pi \right]$ and $y=x-\pi$, we have
\begin{align*}
& \left.  \dot{V}\left(  x,y\right)  \right\vert _{y=x-\pi}=3a\sin x-3a\sin x=0.
\end{align*}
\item {} On the top left edge of $A_L$, $x\in \left]  0 ,\pi \right[$ and $y=x$, $\dot{V}$ is negative: 
\begin{align*}
& \left.  \dot{V}\left(  x,y\right)  \right\vert _{y=x}=-6a\sin x<0.
\end{align*}
\item {} On $\Gamma$, $\dot{V}$ is negative.
Since $0<\gamma(y)<\pi$ 
for $y\in \left] 0,\pi \right[$, as seen in Remark~\ref{rem:gamma}, we have
\begin{align*}
& \left.  \dot{V}\left(  x,y\right)  \right\vert _{x=\gamma\left(  y\right)} =-3a\sin \gamma (y)-3a\sin y< 0.
\end{align*}
\end{enumerate}
In conclusion, the maximum value of the orbital derivative in the compact set $A_L$ is $0$, and the minimum is $-6a$.

\item{} Consider the region $A_{M}$. We have%
\begin{align*}
\dot{V}\left(  x,y\right)
& =-2\left(  \pi-x\right)  +a\sin x-a\sin y,
\end{align*}
and again there are no local extrema in the interior of $A_M$, since $\partial_x \dot{V}=2+a \cos x=0$ 
has no solutions.
By continuity, 
\[ \dot{V}((\pi,0) ) = \dot{V}((\pi,\pi) ) =0,  \] 
while, as seen above, $\dot{V}$ is negative on $\Gamma$, which is shared with  $A_L$.
It only remains to consider the right edge of $A_M$, where $x=\pi$ for $y\in \left] 0,\pi \right[$. 
The orbital derivative over that edge is
\begin{align*}
& \left.  \dot{V}\left(  x,y\right)  \right\vert _{x=\pi} =-a\sin y<0,
\end{align*}
which is again negative except at the points $\left(  \pi,0\right)  \,$\ and $\left(  \pi,\pi\right)$ where it attains its maximum value   $0$.
\item{} Consider the region $A_{R}$. We have%
\begin{align*}
\dot{V}\left(  x,y\right)  
& =-a\sin y+a\sin x.
\end{align*}
There are no local extrema in the interior of $A_R$ since the stationarity system
\[
\begin{split}
\partial_x \dot{V}\left(  x,y\right)=a\cos x = 0\\
\partial_y \dot{V}\left(  x,y\right)=-a\cos y = 0
\end{split}
\]
has the only solutions  $(\frac{3\pi}{2},-\frac{\pi}{2})$ and $(\frac{3\pi}{2},\frac{\pi}{2})$, which 
lie in the boundary of $A_R$. The values of $\dot{V}$ at those points are
\[
\begin{split}
& \dot{V}\left( \frac{3\pi}{2},\frac{\pi}{2}\right)=-2a \\
& \dot{V}\left( \frac{3\pi}{2},-\frac{\pi}{2}\right)=0.
\end{split}
\]

We now assess the boundary of $A_R$, formed by three line segments and the left vertical edge shared with $A_M$. At the lower edges of  $A_{R}$ the orbital derivative is again $0$, as we can see in the next four items.%
\begin{enumerate}
\item {} {In the left lower edge of $A_R$, $x\in \left[ \pi, \frac{3\pi}{2} \right] $ and $y-\pi-x$ we have 
\begin{align*}
& \left.  \dot{V}\left(  x,y\right)  \right\vert _{y=\pi-x} =-a\sin\left(  \pi-x\right)  +a\sin x=0.
\end{align*}}
\item{} {In the right lower edge of $A_R$, $x\in \left[ \frac{3\pi}{2},2\pi  \right] $ and $y=x-2\pi$ we have 
\begin{align*}
& \left.  \dot{V}\left(  x,y\right)  \right\vert _{y=x-2\pi} =-a\sin\left(  x-2\pi\right)  +a\sin x=0.
\end{align*}}
\item{} {In the right top edge of $A_R$ the orbital derivative is trivially negative for $x\in \left] \pi, 2\pi \right[ $ and $y=2\pi-x$ 
\begin{align*}
& \left.  \dot{V}\left(  x,y\right)  \right\vert _{y=2\pi-x}  =2a\sin x <0.
\end{align*}
}
\item{} {In the vertical left edge of $A_R$ shared with $A_M$ the orbital derivative is trivially negative by continuity for $y\in \left] 0, \pi \right[ $.
}
\end{enumerate}
\end{enumerate}

Combining all the cases, the orbital derivative is negative in the interior of
$A_1$. The orbital derivative attains its maximum $0$ at the fixed point $(\pi,\pi)$ and at the lower
edges of $A_1$, which contain the other fixed points of $F$ in $A_1$. 
\end{proof}
\begin{remark} Notice 
that the lower edges of $A_1$ only 
intersect the heteroclinics at the fixed points of the vector field, 
as depicted in Fig.~\ref{Fig1}.
\end{remark}

\subsection{Full picture}\label{par:full}

\begin{figure}
[ptb]
\begin{center}
\includegraphics[
height=4in,
width=4in
]
{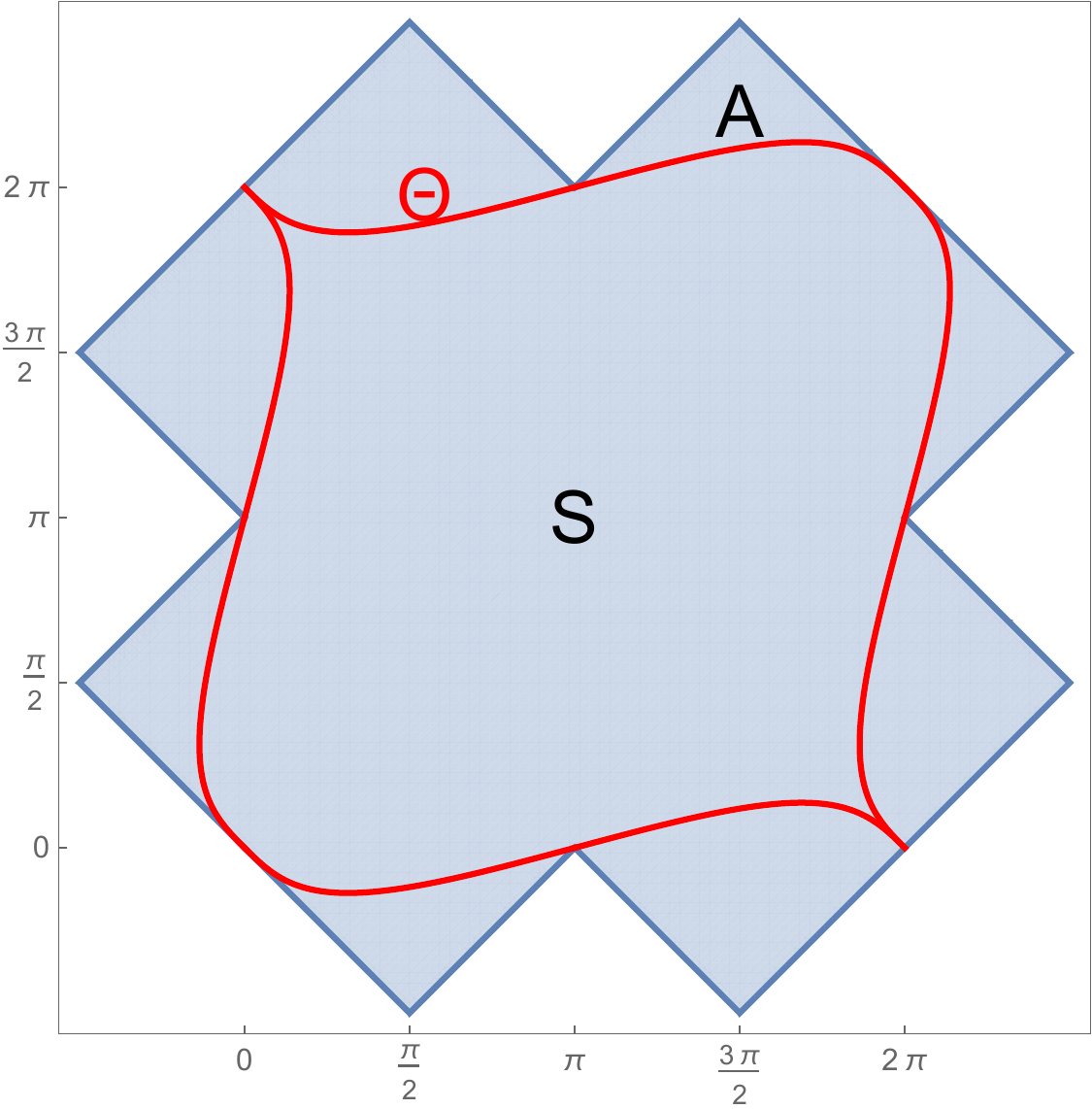}%
\caption{The compact set $A$ containing the open set $S$ where the Lyapunov function has negative orbital derivative -- except at the asymptotically stable node -- is shaded in blue. We see as well the set $\Theta=\partial S$.
}%
\label{Figfull}%
\end{center}
\end{figure}

The preceding results 
in this section, namely Lemmas~\ref{lemma1} through \ref{lemma5} and Theorem~\ref{thm_sign_A1}, show that the function $V$ defined globally by \eqref{function_V} acts as a Lyapunov function
on the set $A_1$, 
since it is positive definite and $\dot{V}$ is non-positive along orbits.
 We will now show that these properties actually extend by symmetry to the whole open set $S$ containing the fixed 
point $(\pi,\pi)$.

\begin{definition}
We define the compact set $A_2$, containing the domain $S_2$
\begin{align*}
A_2 &= \left\{ (x,y) : \pi \leq y \leq 2\pi, y \leq x, y \leq 4\pi-x,y \geq x-\pi \right\} \\
&\cup \left\{ (x,y) : 0 \leq y \leq \pi, y \geq 2\pi - x, y \leq 3\pi - x, y \geq x - 2\pi \right\}.
\end{align*}
\end{definition}
The set $A_2$ is the reflection of the set $A_1$ along the line $y=2\pi-x$.

\begin{definition}
We define the compact set $A_3$, containing the domain $S_3$
\begin{align*}
A_3 &= \left\{ (x,y) : \pi \leq x \leq 2\pi, y \geq x, y \leq 4\pi-x,y \leq x+\pi \right\} \\
&\cup \left\{ (x,y) : 0 \leq x \leq \pi, y \geq 2\pi - x, y \leq 3\pi - x, y \leq x + 2\pi \right\}.
\end{align*}
\end{definition}
The set $A_3$ is the rotation of the set $A_1$ around $\left( \pi, \pi \right)$.
\begin{definition}
We define the compact set $A_4$, containing the domain $S_4$
\begin{align*}
A_4 &= \left\{ (x,y) : 0 \leq y \leq \pi, y \geq x, y \leq -x,y \leq x+\pi \right\} \\
&\cup \left\{ (x,y) : \pi \leq x \leq 2\pi, y \leq 2\pi - x, y \geq \pi - x, y \leq x + 2\pi \right\}.
\end{align*}
\end{definition}
The set $A_4$ is the reflection of the set $A_1$ along the line $y=x$.

\begin{figure}
[ptb]
\begin{center}
\includegraphics[height=2in,width=2in]{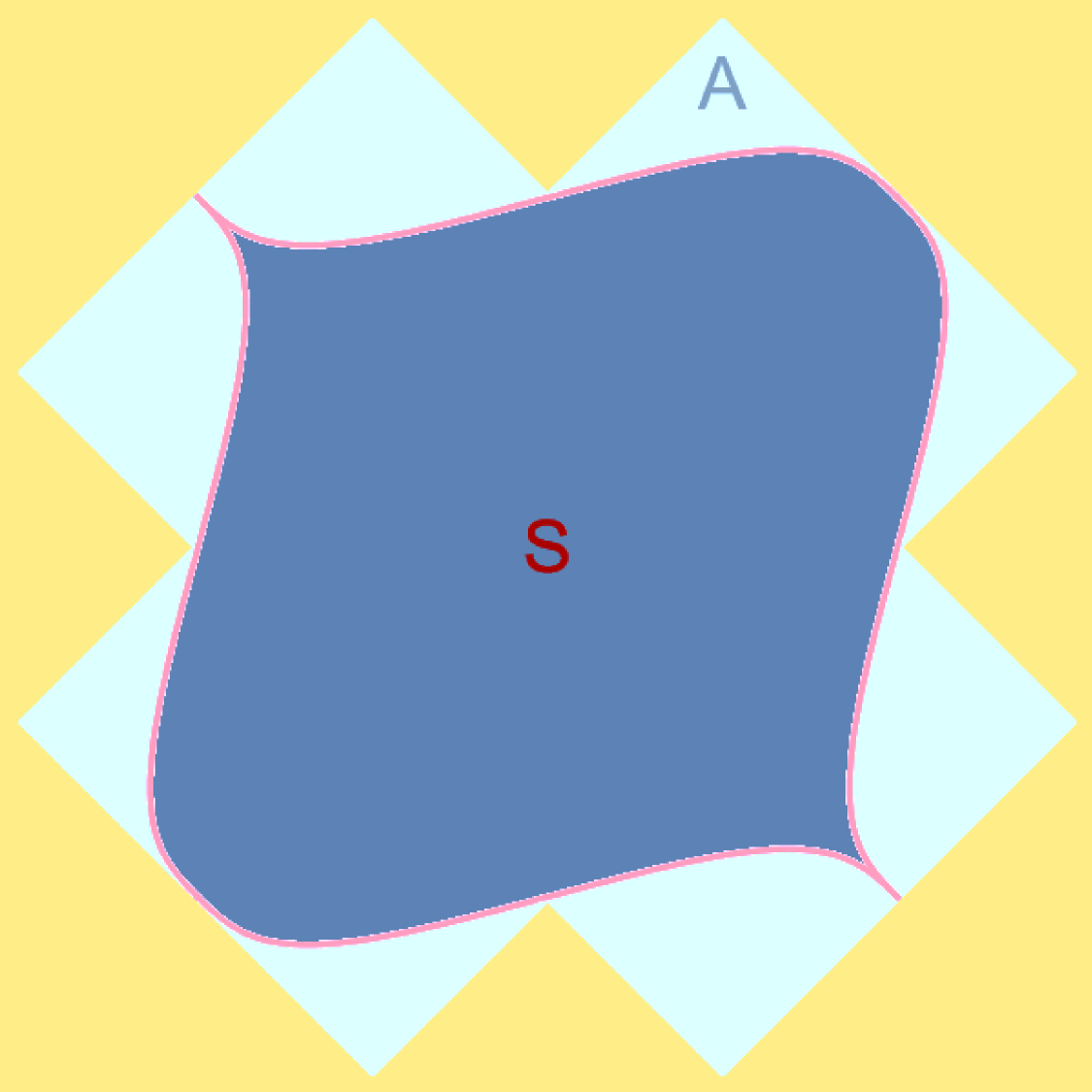}
\caption{The compact set $A$, in white, contains the open set $S$ in dark blue, where the Lyapunov function is positive and its orbital derivative $\dot{V}$ is negative, except at the asymptotically stable node $\left( \pi,\pi \right) $.}
\label{Figset}
\end{center}
\end{figure}

The union
\[
A=\bigcup_{i=1}^4 A_i
\]
is a compact set such that $S\subset A$.

\begin{remark}Recalling the construction of the involutions $\Phi_j$ from 
Definition \ref{def:phii} of $\Phi_j$,  it is immediate to check that, similarly to \eqref{eq:phi_relation},
\begin{align}\label{eq:Ai_relation} 
\Phi_j\left( A_1 \right) = A_{j} \text{ for } j = 1, 2, 3,4.
\end{align}
\end{remark}

\noindent We now show that $V$ is invariant relative to $\Phi_j$,  
 $j=1,2,3,4$.
\begin{lemma}\label{lemma:eqV}
We have
\begin{equation}
V\left( \Phi_j \left( x,y \right) \right)=V\left(  x,y  \right)
\end{equation}
\end{lemma}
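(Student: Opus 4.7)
The plan is to verify the identity $V(\Phi_j(x,y)) = V(x,y)$ directly, case by case, for $j=1,2,3,4$, by substituting the explicit formulas for $\Phi_j$ given in Definition~\ref{def:phii} into the definition $V(x,y) = |\pi - x| + |\pi - y|$. The key algebraic fact used throughout is the symmetry $|a| = |-a|$ of the absolute value, which is what makes $V$ behave well under the reflections and rotation involved.

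First I would dispense with $\Phi_1$, the identity, for which the identity is immediate. Then for $\Phi_2(x,y) = (2\pi - y, 2\pi - x)$, I would compute
\[
V(\Phi_2(x,y)) = |\pi - (2\pi - y)| + |\pi - (2\pi - x)| = |y - \pi| + |x - \pi| = V(x,y),
\]
using $|y - \pi| = |\pi - y|$ and $|x - \pi| = |\pi - x|$. For $\Phi_3(x,y) = (2\pi - x, 2\pi - y)$, the same type of computation gives
\[
V(\Phi_3(x,y)) = |\pi - (2\pi - x)| + |\pi - (2\pi - y)| = |x - \pi| + |y - \pi| = V(x,y).
\]
Finally, for the coordinate-swap $\Phi_4(x,y) = (y,x)$, the invariance follows from the symmetry of the defining expression itself: $V(\Phi_4(x,y)) = |\pi - y| + |\pi - x| = V(x,y)$.

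There is no real obstacle here; the lemma is a straightforward direct verification. The underlying structural reason is that $V$ is a sum of distances from the coordinates to $\pi$, and each $\Phi_j$ is an isometry of $\mathbb{R}^2$ fixing the point $(\pi,\pi)$ (either trivially, by a reflection about a line through $(\pi,\pi)$, or by a rotation about $(\pi,\pi)$), so it permutes or negates the two arguments $\pi-x$ and $\pi-y$ up to sign. The only thing worth noting is that while $V$ is symmetric under the Euclidean isometry group generated by $\{\Phi_j\}$, it would not be invariant under a generic element of $\Gamma$; what makes this lemma useful in the sequel is precisely that these four specific involutions act as the correct symmetries relating the sectors $A_1,\dots,A_4$ via \eqref{eq:Ai_relation}.
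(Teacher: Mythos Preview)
Your proof is correct and follows essentially the same approach as the paper: a direct case-by-case verification using the explicit formulas for $\Phi_j$ and the symmetry of the absolute value. Your additional remark about $\Phi_j$ being isometries fixing $(\pi,\pi)$ is a nice structural explanation, but the core argument is identical to the paper's.
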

\begin{proof}
This is a simple verification.
Recall the construction of the maps $\Phi_j$ from Definition \ref{def:phii}. 
The map $\Phi_1$ is the identity, so the statement is trivial. For the other maps we have
\begin{align*}
& V\left( \Phi_2 \left( x,y \right) \right)=V\left( 2\pi-y,2\pi-x \right)=
\left\vert y-\pi\right\vert +\left\vert x-\pi\right\vert =
V\left(  x,y  \right),\\
&  V\left( \Phi_3 \left( x,y \right) \right)=V\left( 2\pi-x,2\pi-y \right)=
\left\vert x-\pi\right\vert +\left\vert y-\pi\right\vert =
V\left(  x,y  \right),\\
&  V\left( \Phi_4 \left( x,y \right) \right)=V\left( y,x \right)=
\left\vert \pi-y\right\vert +\left\vert \pi-x\right\vert =
V\left(  x,y  \right).
\end{align*}
\end{proof}

\begin{theorem}\label{thm:eqOD}
The orbital derivative $\dot{V}$ is invariant under the action of the $\Phi_j$, $j=1 \ldots 4$:
\begin{equation}
\dot{V}\left( \Phi_j \left( x,y \right) \right)=\dot{V}\left(  x,y  \right).
\end{equation}

\end{theorem}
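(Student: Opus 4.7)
The plan is to assemble the result directly from two ingredients already in hand: the $\Phi_j$-equivariance of $F$ (Remark~\ref{rem:simple_exercise}, proved in \cite{BAO2024b}) and the $\Phi_j$-invariance of $V$ (Lemma~\ref{lemma:eqV}). The orbital derivative is defined in Definition~\ref{def_orb_deriv} as $\dot{V}(x,y)=V(F(x,y))-V(x,y)$, so substituting $\Phi_j(x,y)$ for $(x,y)$ gives
\[
\dot{V}(\Phi_j(x,y)) = V(F(\Phi_j(x,y))) - V(\Phi_j(x,y)).
\]

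First I would rewrite the first term using equivariance $F\circ\Phi_j=\Phi_j\circ F$, yielding $V(F(\Phi_j(x,y)))=V(\Phi_j(F(x,y)))$. Then I would apply Lemma~\ref{lemma:eqV} twice to conclude $V(\Phi_j(F(x,y)))=V(F(x,y))$ and $V(\Phi_j(x,y))=V(x,y)$. Subtraction then gives exactly $\dot{V}(x,y)$, which is the claimed equality.

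There is no genuine obstacle here: the statement is essentially the combination of Proposition~\ref{thm_conjugacy} (applied to each involution $\Phi_j$ as topological conjugacy of $F$ to itself) with the fact that the candidate Lyapunov function is itself symmetric under the group $\Gamma=\{\Phi_1,\Phi_2,\Phi_3,\Phi_4\}$. The only thing to be careful about is to make sure the two ingredients are invoked in the correct order (equivariance first, invariance of $V$ second), so that the composition $V\circ F\circ \Phi_j$ is rewritten as $V\circ\Phi_j\circ F$ before the $\Phi_j$ is absorbed by $V$. Once this identity is established, the symmetry argument announced in the paragraph preceding Definition~\ref{def:phii} is complete: since $\dot{V}$ is negative on $A_1$ by Theorem~\ref{thm_sign_A1} and the relations $\Phi_j(A_1)=A_j$ from \eqref{eq:Ai_relation} hold, we get $\dot{V}<0$ on each $A_j$ and hence on $A=\bigcup_j A_j\supset S$, which is the conclusion needed to invoke the Discrete Lyapunov Stability Theorem on the whole basin $S$.
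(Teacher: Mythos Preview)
Your argument is correct and matches the paper's own proof essentially line for line: expand the definition of $\dot{V}$ at $\Phi_j(x,y)$, use the $\Phi_j$-equivariance of $F$ from Remark~\ref{rem:simple_exercise} to swap $F$ and $\Phi_j$, then apply Lemma~\ref{lemma:eqV} to both terms. The additional paragraph about how this feeds into Theorem~\ref{thm_sign_A1} and the decomposition $A=\bigcup_j A_j$ is also accurate and anticipates exactly the use the paper makes of the result.
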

\begin{proof}
For each \( \Phi_j \), \( j = 1, \dots, 4 \), we have
\begin{align*}
\dot{V}\left( \Phi_j \left( x, y \right) \right) 
&= V\left( F \circ \Phi_j \left( x, y \right) \right) - V\left( \Phi_j \left( x, y \right) \right),
\end{align*}
from which, using the commutativity of \( F \) and \( \Phi_j \) as noted in Remark~\ref{rem:simple_exercise}, we obtain

\begin{align*}
& \dot{V}\left( \Phi_j \left( x,y \right) \right)= V\left( \Phi_j\circ F \left( x,y \right) \right)-V\left( \Phi_j \left( x,y \right) \right)
\end{align*}
and, applying now Lemma~\ref{lemma:eqV}, it follows that
\begin{align*}
& \dot{V}\left( \Phi_j \left( x,y \right) \right)= V\left(  F \left( x,y \right) \right)-V\left( \left( x,y \right) \right)=\dot{V} \left( x,y  \right)
\end{align*}
\end{proof}

Theorem~\ref{thm:eqOD} implies that 
 the orbital derivative $\dot{V}$ is non-positive in $A_1$ if and only if it is 
non-positive in $A_2$, $A_3$ and $A_4$. Moreover, $\dot{V}$ is in fact negative in the interior of 
\[
A=\bigcup_{j=1}^4A_j, 
\]
except at $\left( \pi,\pi \right)$, since on the boundaries of $A_1$ along the lines $y=x$ and $y=2\pi-x$ it is negative 
(except at its endpoints which coincide with fixed points of $F$). 
Since $S\subset \text{Int}\left( A \right)$, the orbital derivative is negative $\dot{V}$ throughout the whole open set  
$S$ except at the fixed point $\left( \pi,\pi \right)$, where $\dot{V}\left( \pi,\pi \right)=0$.


Recalling the discrete Lyapunov Theorem~\ref{Lyapunov for maps}, we may now summarize our
results in the following form. 

\begin{theorem}
\label{Lyapunov_final_result}
The function \( V(x,y) \) defined in \eqref{function_V} is a strict Lyapunov function for the fixed point
\( \left( \pi, \pi \right) \) in the open set \( S \). Consequently, \( \left\{ \left( \pi, \pi \right) \right\} \)
is asymptotically stable with an open basin of attraction \( S \), i.e., it is Lyapunov stable and the \( \omega \)-limit set of all
initial conditions \( \left( x_0, y_0 \right) \in S \).

\end{theorem}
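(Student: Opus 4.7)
The plan is to verify directly the hypotheses of the discrete Lyapunov Stability Theorem (Theorem~\ref{Lyapunov for maps}) with $H=\{(\pi,\pi)\}$ and with the open set of that theorem taken to be $S$. The positive-definiteness condition is immediate from \eqref{function_V}: $V$ is continuous, $V(\pi,\pi)=0$, and $V(x,y)=|\pi-x|+|\pi-y|>0$ for every $(x,y)\in S\setminus\{(\pi,\pi)\}$, so condition (1) of Theorem~\ref{Lyapunov for maps} is verified at once.

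The substantive step is to show that $\dot{V}(x,y)<0$ throughout $S\setminus\{(\pi,\pi)\}$. Theorem~\ref{thm_sign_A1} gives this on $A_1$, with $\dot{V}=0$ only on the lower edges of $A_1$ and at the fixed point $(\pi,\pi)$. Invoking the $\Phi_j$-invariance of $\dot{V}$ established in Theorem~\ref{thm:eqOD}, together with $\Phi_j(A_1)=A_j$, transports this sign information to each $A_j$, $j=2,3,4$. Since $S\subset \mathrm{Int}(A)$ where $A=\bigcup_{j=1}^4 A_j$, what remains is to identify the zero set of $\dot{V}$ inside $S$. The lower edges of each $A_j$ lie on the heteroclinic invariant set $\Theta=\partial S$, which is disjoint from the open set $S$. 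The diagonal segments $y=x$ and $y=2\pi-x$ are shared boundaries between adjacent $A_j$ and do lie in $S$; on these segments, however, the explicit computations inside Theorem~\ref{thm_sign_A1} (e.g.\ $\dot{V}=-6a\sin x$ on the top edge of $A_L$) combined with $\Phi_j$-invariance give $\dot{V}<0$ away from $(\pi,\pi)$. Hence $\dot{V}<0$ on $S\setminus\{(\pi,\pi)\}$.

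With both conditions of Theorem~\ref{Lyapunov for maps} verified, we conclude Lyapunov stability and asymptotic stability of $(\pi,\pi)$. To identify the basin of attraction with $S$ itself, I would combine the $F$-invariance of $S$ recalled in Section~\ref{sec_DS} with the strict monotone decrease of $V$ along nontrivial orbits in $S$: any orbit $(x_n,y_n)$ starting in $S$ stays in $S$, the sequence $V(x_n,y_n)$ is strictly decreasing and bounded below, so its $\omega$-limit set is nonempty, compact, $F$-invariant and entirely contained in $S\cap \{\dot{V}=0\}=\{(\pi,\pi)\}$, forcing convergence to $(\pi,\pi)$. The main obstacle is not analytical but a careful bookkeeping of where the zero set of $\dot{V}$ meets the open basin $S$, separating the degenerate pieces on $\Theta=\partial S$ (harmless) from the internal diagonal boundaries of the $A_j$ (handled by the explicit sign computations in Theorem~\ref{thm_sign_A1} and equivariance).
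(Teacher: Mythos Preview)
Your proposal follows essentially the same route as the paper: verify positive definiteness of $V$ directly, invoke Theorem~\ref{thm_sign_A1} on $A_1$, transport the sign information to the other $A_j$ via the $\Phi_j$-invariance of $\dot V$ from Theorem~\ref{thm:eqOD}, observe that the diagonals $y=x$ and $y=2\pi-x$ carry strictly negative $\dot V$ away from $(\pi,\pi)$, and conclude via Theorem~\ref{Lyapunov for maps}. The additional LaSalle-type argument you give to identify the basin with $S$ is a welcome supplement; the paper itself simply invokes the discrete Lyapunov theorem at this point.

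One correction is needed. You write that ``the lower edges of each $A_j$ lie on the heteroclinic invariant set $\Theta=\partial S$''. This is false: the lower edges of $A_1$ are the straight segments $y=-x$, $y=x-\pi$, $y=\pi-x$, $y=x-2\pi$, which together with their $\Phi_j$-images constitute $\partial A$, not $\Theta$; the paper's Remark after Theorem~\ref{thm_sign_A1} explicitly notes that these edges meet $\Theta$ only at fixed points. The correct reason these edges are harmless is the one you also state, namely $S\subset\mathrm{Int}(A)$, so $\partial A\cap S=\varnothing$. Relatedly, in your LaSalle argument the $\omega$-limit set of an orbit in $S$ lies a priori only in $\bar S$, and $\bar S\cap\{\dot V=0\}$ contains, besides $(\pi,\pi)$, the unstable fixed points on $\Theta$; to exclude these you should either note that the heteroclinics (minus endpoints) lie in $\mathrm{Int}(A)$ where $\dot V<0$, so $V$ strictly decreases along $\Theta$ toward values $\pi$ or $2\pi$ that an orbit from $S$ has already passed below, or simply invoke the strong $F$-invariance of $S$ from Section~\ref{sec_DS} to keep the $\omega$-limit set inside $S$.
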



\section{Conclusion}

Lyapunov functions, introduced over a century ago by Lyapunov  \cite{lyapunov1892}, remain an essential tool for analysing the stability of dynamical systems, in both theoretical and practical contexts, across science and engineering. Constructing these functions is an ongoing challenge that impacts various fields, from real-world engineering applications to  mathematics proper. The discovery of a dynamical system admitting an explicit Lyapunov function may  thus be considered a striking situation.

The diffeomorphism \eqref{Eq3}, arising in the problem of Synchronisation of three limit cycle oscillators in a line with nearest-neighbour interactions, was studied in \cite{BAO2024b} and shown to have a unique ($\pi, \pi)$ fixed point asymptotically stable attractor. This was done by constructing a network of heteroclinic connections and showing laboriously that the fixed point is asymptotically stable and that its basin of attraction is the interior of the region bounded by the heteroclinics.

In this paper we prove asymptotic stability of the fixed point by constructing a discrete Lyapunov function. This construction 
is, of course, deeply inspired by the underlying geometry of the phase space symmetries and dynamics. 
It is also crucially linked to the fact that discrete Lyapunov functions are only required to be continuous. 

\subsection*{Acknowledgements}

The author Jorge Buescu was partially supported by  Fundação para a Ciência e
Tecnologia, Fundação para a Ciência e Tecnologia, UIDB/04561/2020.

The author Emma D'Aniello was partially supported by the program Erasmus+, the projects 2024 DYNAMIChE of the INdAM Group GNAMPA, PRIN 2022 QNT4GREEN, DAISY of the Univ. Vanvitelli (D.R. 111, 09/02/2024), and the group UMI-TAA: Approximation Theory and Applications.

The author Henrique M. Oliveira was funded by FCT/Portugal through project UIDB/04459/2020 with DOI identifier 10-54499/UIDP/04459/2020.

\subsection*{Data availability}
Not applicable. The proofs and calculations were presented in the current article. Any queries can be addressed to the corresponding author, Henrique M. Oliveira.

\subsection*{Disclosure of interest}
The authors report no conflict of interest.

\bibliographystyle{abbrv}
\bibliography{BibloH2}

\end{document}